\documentclass[12pt]{amsart}

\usepackage{enumerate}
\usepackage{amsfonts}
\usepackage{amsthm}
\usepackage{amsmath}
\usepackage{amssymb}
\usepackage{amsrefs}
\usepackage{t1enc}
\usepackage{fullpage}
\usepackage{dsfont}
\usepackage{graphicx}
\usepackage{mathrsfs}
\usepackage{mathptmx}
\usepackage[a4paper,left=2.5cm,right=2.5cm, footskip=30pt]{geometry}
\usepackage{lipsum}
\usepackage{geometry}

\newtheorem{theorem}{Theorem}[section]
\newtheorem{prop}[theorem]{Proposition}
\newtheorem{lemma}[theorem]{Lemma}

\newtheorem{kov}[theorem]{Corollary}
\theoremstyle{definition}
\newtheorem{megj}[theorem]{Remark}

\newcommand{\inte}{\mathop{\mathrm{int}}}

\begin{document}

\author{Bal\'azs Maga}

\address{E\"otv\"os Lor\'and University, P\'azm\'any P\'eter s\'et\'any 1/C, Budapest, H-1117 Hungary}
\email{magab@cs.elte.hu}

\title{Baire Categorical Aspects of First Passage Percolation}

\thanks{The author was supported by the \'UNKP-17-2 New National Excellence of the Hungarian Ministry of Human Capacities, and by the Hungarian National Research, Development and Innovation Office--NKFIH, Grant 124003.}

\subjclass[2010]{Primary: 54E52; Secondary: 54E35}
\keywords{residuality, metric spaces, first passage percolation}

\begin{abstract}
In the previous decades, the theory of first passage percolation became a highly important area of probability theory. In this work, we will observe what can be said about the corresponding structure if we forget about the probability measure defined on the product space of edges and simply consider topology in the terms of residuality. We focus on interesting questions arising in the probabilistic setup that make sense in this setting, too. We will see that certain classical almost sure events, as the existence of geodesics have residual counterparts, while the notion of the limit shape or time constants gets as chaotic as possible.
\end{abstract}

\maketitle

\section{Introduction}

First passage percolation was introduced by Hammersley and Welsh in 1965 as a model to describe fluid flows through porous medium. It quickly became a popular area of probability theory, as one can easily ask very difficult questions. Many of these have still remained unsolved despite the growing interest from mathematicians, physicists and biologists. The main setup is the following: we have a given graph, usually we like to consider the lattice $\mathbb{Z}^d$. We denote the set of nearest neighbor edges by $E$. We place independent, identically distributed, non-negative random variables with a distribution law $\mu$ on each edge $e\in{E}$, which is called the passage time of $e$, and denoted by $\tau(e)$. We think about it as the time needed to traverse $e$. Based on this, we can define the passage time of any finite path $\Gamma$ of consecutive edges as the sum of the passage times of contained edges:
\begin{displaymath}
\tau(\Gamma)=\sum_{e\in\Gamma}\tau(e).
\end{displaymath}
\noindent Using this definition, we might define the passage time between any two points, or in other words the $T$-distance of any two points $x,y\in\mathbb{R}^d$
\begin{displaymath}
T(x,y)=\inf_{\Gamma}\tau(\Gamma),
\end{displaymath}
\noindent where the infimum is taken over all the paths connecting $x'$ to $y'$, where $x'$ and $y'$ are the unique lattice points such that $x\in x'+[0,1)^d$, $y\in y'+[0,1)^d$. The term "distance" is appropriate here: one can easily show that $T:\mathbb{Z}^d\times\mathbb{Z}^d\to\mathbb{R}$ is a pseudometric, that is an "almost metric" in which the distance of distinct points might be 0.

In brief, this is the probabilistic setup. In the sequel when we recall results related to this theory, for the sake of brevity we will often omit the precise technical conditions, such as conditions about the finiteness of certain moments or the value of the distribution function in the infimum of its support.  Instead of it we will simply refer to "some mild conditions" about the distribution function and cite the source of the result. For the reader interested in the details the recent survey paper \cite{FPP} is also warmly recommended.

By a similar virtue, we can define the topological setup: instead of non-negative random variables on each edge, we consider some $A\subseteq{\mathbb{R}_{\geq{0}}}$. To exclude trivialities, let $A$ have at least two elements. The passage time of any edge will be an element of $A$, and passage times of paths and between points are defined as in the probabilistic setup. Formally, the space of configurations is $\Omega=\times_{e\in{E}}A$. To define topology, we equip $A$ by its usual subspace topology inherited from $\mathbb{R}$, and equip $\Omega=\times_{e\in{E}}A$ with the product topology. If there might be ambiguity, we will write $T_\omega$ and $\tau_\omega$ for the passage times in the $\omega\in\Omega$ configuration. 

Now we are interested in the classical questions of the probabilistic setup which make sense in the topological setup as well. More precisely, we examine whether a property which has probability 1 in the probabilistic setup holds in a residual set of $\Omega$ in the topological setup. For example, as it was proved in \cite{WR} for $d=2$ and any distribution, and in \cite{K} for arbitrary $d$ under mild conditions on the distribution, with probability 1 there exists an optimal path between any two lattice points, which is called a geodesic. Furthermore, if the probability distribution function is continuous, geodesics are unique with probability 1. In Section 2, we show that both of these properties have a topological version, which also holds in a large set:

\begin{theorem}
In a residual set of $\Omega$, there exists a geodesic between any two lattice points. Furthermore, if $A$ has no isolated points then in a residual set of $\Omega$ these geodesics are unique.
\end{theorem}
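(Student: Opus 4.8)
The plan is to prove the two assertions separately, in each case reducing to a \emph{fixed} ordered pair of lattice points $x,y$; since there are only countably many such pairs and a countable intersection of residual sets is residual, this suffices. We use throughout that $\Omega$ is a Baire space (so that residuality is non-degenerate), and we may assume $d\ge 2$: for $d=1$ the unique simple path between two lattice points is the unique geodesic, so the statement is trivial. For a fixed pair it suffices to exhibit a residual set on which a geodesic between $x$ and $y$ exists (resp. is unique), and then intersect over all pairs.

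\emph{Existence.} Fix $x,y\in\mathbb Z^d$ and let $G_{x,y}\subseteq\Omega$ be the set of configurations admitting a geodesic from $x$ to $y$; I claim $G_{x,y}$ contains a dense open set, hence is residual. If $A$ is bounded with $\inf A>0$ this is immediate, with $G_{x,y}=\Omega$: in any configuration a path of combinatorial length $L$ has passage time at least $L\inf A$, so an optimal path has length at most $\|x-y\|_1\sup A/\inf A$, and among the finitely many shorter simple paths the infimum is attained. In every other case I use a \emph{barrier} construction. Given a nonempty basic open set $U$ constraining finitely many edges $e_1,\dots,e_n$, choose a simple path $\Gamma_0$ from $x$ to $y$ avoiding $e_1,\dots,e_n$ (possible since $\mathbb Z^d$ stays connected after deleting finitely many edges), and assign arbitrary values of $A$ to the edges of $\Gamma_0$, with total sum $t_0$, where we take $t_0$ small — arbitrarily small if $\inf A=0$, and of no importance if $A$ is unbounded. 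Fix a rational $r$ with $t_0<r<\sup A$: this is possible, since if $\sup A=\infty$ any $r>t_0$ works, while if $\sup A<\infty$ then necessarily $\inf A=0$ and $t_0$ can be taken below $\sup A$. Enclose $\Gamma_0$ and $e_1,\dots,e_n$ in a large box $B$ and let $W$ be a finite set of edges outside $B$ through which every path leaving $B$ must pass; note that $A\cap(r,\infty)$ is a nonempty open subset of $A$. Let $U'\subseteq U$ be the basic open set obtained by additionally constraining each edge of $\Gamma_0$ to a small neighbourhood of its chosen value (small enough to keep the total below $r$) and each edge of $W$ to $A\cap(r,\infty)$, leaving the constraints on $e_1,\dots,e_n$ untouched; since $\Gamma_0$ and $W$ are disjoint from each other and from $\{e_1,\dots,e_n\}$, this $U'$ is a genuine nonempty open subset of $U$. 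For $\omega\in U'$ we have $\tau_\omega(\Gamma_0)<r$, while any path from $x$ to $y$ leaving $B$ uses an edge of $W$ and so has passage time exceeding $r$; hence every geodesic lies in $B$, and since $B$ contains only finitely many simple paths from $x$ to $y$, a geodesic exists. Thus $U'\subseteq G_{x,y}$; as $U$ was arbitrary, $\inte(G_{x,y})$ is dense, so $G_{x,y}$ contains a dense open set and is residual, and intersecting over all pairs yields the first statement.

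\emph{Uniqueness.} Assume $A$ has no isolated points. For each edge $e$ the set $\{\omega:\tau_\omega(e)>0\}$ is open (the preimage of the open set $A\cap(0,\infty)$ under a coordinate projection) and dense, since $0$, if it lies in $A$, is non-isolated and hence a limit of positive elements of $A$; intersecting over all edges, the set $P$ of configurations with all passage times positive is residual. Next, for each ordered pair $x,y$ and each pair of distinct simple paths $\Gamma\ne\Gamma'$ between them, put $D_{\Gamma,\Gamma'}=\{\omega:\tau_\omega(\Gamma)\ne\tau_\omega(\Gamma')\}$. Its complement is the zero set of the continuous map $\omega\mapsto\tau_\omega(\Gamma)-\tau_\omega(\Gamma')$, hence closed, and it has empty interior: choosing an edge $e^*$ in the symmetric difference of $\Gamma$ and $\Gamma'$ makes this map of the form $\pm\omega(e^*)+(\text{terms independent of }\omega(e^*))$, so on any nonempty basic open set the coordinate $\omega(e^*)$ may be varied within a nonempty open subset of $A$ that is not a singleton (as $A$ has no isolated points), changing the value of the map. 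Hence each $D_{\Gamma,\Gamma'}$ is residual, and there are countably many of them. Intersecting $P$, all the sets $D_{\Gamma,\Gamma'}$, and the residual set from the existence part, we obtain a residual set on which, for every $x,y$: a geodesic exists; every geodesic is simple (a repeated vertex yields a strictly cheaper subpath, the removed loop having an edge of positive passage time); and distinct simple paths have distinct passage times. Therefore the geodesic from $x$ to $y$ is unique.

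The main obstacle is the existence part when $A$ is bounded, since arbitrarily large barriers are then unavailable; the point of the case distinction is exactly that boundedness of $A$ either forces $\inf A>0$, in which case geodesics exist in every configuration, or leaves $\inf A=0$, in which case the reference path $\Gamma_0$ can be made as cheap as we like so that a modest barrier still dominates it. A secondary technical care is to phrase all the strict inequalities around a fixed rational threshold $r$, so that the witnessing family of configurations is genuinely open rather than merely a countable intersection of open conditions.
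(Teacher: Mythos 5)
Your proof is correct, and it is worth pointing out where it coincides with and where it departs from the paper's argument. The existence part is the same basic barrier construction the paper uses: shrink an arbitrary cylinder set so that one explicit path from $x$ to $y$ is cheap while all paths escaping a fixed finite region are forced to be expensive, so the infimum is attained among the finitely many simple paths in that region. The details differ slightly: the paper fixes some $a>0$ in $A$ and floods a thick $\ell_1$-ball of edges around $x$ with passage times near $a$, so that the $\ell_1$-shortest path costs about $|x-y|a$ while any escaping path costs about $na$; you instead make the reference path itself cheap (choosing $t_0<r<\sup A$) and impose a thin separating barrier $W$ with values in $A\cap(r,\infty)$, observing separately that when $\inf A>0$ geodesics exist in \emph{every} configuration, so nothing needs proving there. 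Both are sound; your case split is not logically required (the barrier works for any $A$ with at least two values once you use $|x-y|\sup A/\inf A$ or the thick-annulus bound), but it makes the easy case transparent. One small imprecision: as written, ``edges outside $B$ through which every path leaving $B$ must pass'' is self-contradictory; you mean the edges crossing $\partial B$ (or a shell around $B$), which is clearly what the argument uses, so this does not affect correctness.

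The uniqueness part is a genuinely different and, I would say, cleaner route than the paper's. The paper shrinks $V$ to a cylinder set $W$ on which a specific path $\Gamma_0$ is the strict minimizer, by first separating $\Gamma_0$ from the other tied minimizers and then tightening the projections along $\Gamma_0$; this makes the ``non-unique'' set nowhere dense for each pair $x,y$. You instead identify two residual properties that jointly force uniqueness globally: (a) all coordinates are positive (residual because $0$, if in $A$, is not isolated), which makes every geodesic a simple path; and (b) for every pair of distinct simple paths $\Gamma\neq\Gamma'$, the open set $\{\tau_\omega(\Gamma)\neq\tau_\omega(\Gamma')\}$ is dense (by varying one coordinate $\omega(e^*)$ with $e^*\in\Gamma\triangle\Gamma'$, again using the no-isolated-points hypothesis), and there are only countably many such pairs. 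Intersecting with the existence set gives uniqueness for all pairs at once. This is more modular and makes the role of the hypothesis on $A$ crisper; the paper's local construction achieves the same thing but is more computational. Both arguments are correct.
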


After this, in Section 3, we turn our attention to infinite geodesics, which are self-avoiding paths of infinitely many edges such that each of their finite subpaths are finite geodesics. We distinguish two types of infinite geodesics: the ones indexed by $\mathbb{N}$, informally which are infinite in only one direction, and the ones indexed by $\mathbb{Z}$, informally which are infinite in both directions. We call the former ones geodesic rays while the latter ones geodesic lines.

In the probabilistic setup, one might easily check by K\H{o}nig's lemma the almost sure existence of a geodesic ray, using that with probability 1 there is a geodesic between any two points. In the topological setup, we can use the same argument to prove the same in a residual set of $\Omega$. Namely, denote the first coordinate vector by $\xi_1$ in $\mathbb{R}^d$ and observe a finite geodesic from $0$ to $n\xi_1$ for $n=1,2,...$. As there are finitely many edges having the origin as one of its endpoints, there are infinitely many of these paths which start with the same edge, then there are infinitely many of them which continue with the same edge, etc. This way one might verify the existence of a geodesic ray. Now it is a natural question whether there are more distinct geodesic rays, where by distinct we mean that they share only finitely many edges. In the probabilistic setup it is conjectured that for continuous distributions there are infinitely many of them with probability 1. For d=2 and a certain class of distribution functions this claim was verified in \cite{AD}. In the following two theorems we will prove that in the topological setup we have a totally different phenomenon.

\begin{theorem} If $\sup A>5 \inf A$ then in a residual set of $\Omega$ there is no more than one geodesic ray in $\mathbb{Z}^d$. \end{theorem}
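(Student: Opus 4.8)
The plan is to exhibit the set of configurations with at most one geodesic ray (up to the equivalence of sharing all but finitely many edges) as a dense $G_\delta$, hence residual. Fix the half-line $\rho=(0,\xi_1,2\xi_1,\dots)$ and write $f_k=\{k\xi_1,(k+1)\xi_1\}$. For $N\in\mathbb N$ set
$$U_N=\bigl\{\omega\in\Omega:\text{there is }K\ge N\text{ such that every geodesic ray of }\omega\text{ uses }f_K\bigr\}.$$
If $\omega\in\bigcap_N U_N$, then for infinitely many $K$ every geodesic ray of $\omega$ passes through $f_K$, so any two geodesic rays of $\omega$ have infinitely many common edges and are not distinct; thus $\bigcap_N U_N$ is contained in the desired set, and it suffices to prove each $U_N$ is dense and open. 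A convenient preliminary reduction: a pair of geodesic rays sharing only finitely many edges can be replaced, by passing to sufficiently late tails, by a pair of edge-disjoint geodesic rays, so throughout one may work with edge-disjoint rays. (Also, by the theorem of Section~2 the configurations having no geodesic at all between some pair of lattice points form a meager set, so this case is harmless.)

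For density of $U_N$, fix a basic open set $V$; it constrains only the finitely many edges in some box $B_M$, and I am free to prescribe a new configuration $\omega'$ on every edge outside $B_M$. I would build $\omega'$ as a ``drainage'' configuration: pick a connected subgraph $\mathcal R\supseteq\rho$ whose edges receive ``cheap'' values (within $\varepsilon$ of $\inf A$), organised so that every infinite self-avoiding path inside $\mathcal R$ eventually merges with $\rho$, and give every remaining edge an ``expensive'' value (within $\varepsilon$ of $\sup A$, or a fixed large value exceeding $5\inf A$ if $\sup A=\infty$). The hypothesis $\sup A>5\inf A$ enters exactly here: arrange $\mathcal R$ so that around every expensive edge there is a cheap bypass of length at most $5$; then replacing that edge by its bypass strictly lowers the cost of any path through it, so no geodesic between lattice points uses an expensive edge. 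Hence every geodesic, in particular every geodesic ray, is confined to $\mathcal R$, and the single-end structure of $\mathcal R$ forces each geodesic ray eventually onto $\rho$, so it uses $f_K$ for all large $K$; this places $\omega'$ in $U_N$.

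Openness I would get from the standard perturbation stability of strict optimality: if a finite self-avoiding path $P$ is the unique geodesic between its endpoints with every competitor costing at least $\tau(P)+\delta$, then this persists after changing any finite edge set $F$ by less than $\delta/(2|F|)$, since each path's cost moves by at most $|F|$ times the perturbation size. Applying this uniformly to the relevant finite initial segments of the rays confined to $\mathcal R$ shows that the witnessing configurations lie in the interior of $U_N$; the confinement-to-$\mathcal R$ property is what makes the estimate uniform over rays with arbitrary starting vertices.

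The main obstacle is the internal tension in the construction of $\mathcal R$. A subgraph all of whose infinite paths merge into one ray must be globally thin (tree-like), whereas to give \emph{every} expensive edge a length-$\le 5$ cheap bypass---and to prevent stray geodesic rays that simply march monotonically outward through a large region of (near-)uniform expensive cost, which are automatically geodesic rays---the cheap set must be locally space-filling; and lattices carry no spanning tree of bounded distortion, so these requirements genuinely pull against each other. The slack in $\sup A/\inf A>5$ has to be spent on reconciling them: one must interleave thick local patches with thin connecting necks across all scales so that simultaneously (i) every expensive edge still has a short enough cheap detour for the factor-$5$ comparison, and (ii) from any vertex, escaping outward through the cheap network beats any escape through the expensive region, which is what pins geodesic rays started arbitrarily far from the origin. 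Making this scale-by-scale bookkeeping close with the constant $5$ is, I expect, the technical heart of the proof.
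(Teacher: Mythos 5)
Your plan---exhibiting the set of ``one geodesic ray'' configurations directly as a dense $G_\delta$---runs into a problem that is not just a difficulty of the $\mathcal R$-construction but a structural one: the sets $U_N$ are not open, and in fact cannot be, because the defining condition quantifies over \emph{all} geodesic rays. Geodesic rays may originate from vertices arbitrarily far from the origin. Any basic open neighborhood of a configuration $\omega$ in the product topology constrains only finitely many edges; outside the constrained box one is free to reassign passage times at will, and in particular one can make a huge annulus far from the origin constant. In a (locally) constant region, straight $\ell_1$-optimal paths are geodesic, and by extending them one obtains geodesic rays that never come near $f_K$. So every neighborhood of $\omega$ contains configurations outside $U_N$, and $U_N$ has empty interior. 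Your perturbation-stability step (``confinement-to-$\mathcal R$ makes the estimate uniform over rays with arbitrary starting vertices'') is precisely where this breaks: confinement to $\mathcal R$ is a property of the whole infinite configuration $\omega'$, not of a cylinder neighborhood of it. The second gap is the one you yourself flag: $\mathbb Z^d$ carries no connected cheap subgraph that is simultaneously single-ended (so all infinite paths merge) and $5$-spanning (so every expensive edge has a length-$\le 5$ cheap bypass). You call this a ``technical heart'' to be closed by multi-scale bookkeeping, but the paper's proof does not resolve this tension---it avoids it.

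The paper's argument is organized around the complement and, crucially, \emph{localizes}. It first proves, for a fixed starting vertex $x$, that the set $F_m(x)$ of configurations with two geodesic rays from $x$ sharing at most $m$ edges is nowhere dense. The cylinder set $V$ witnessing this constrains only edges inside a fixed box $K_2$: the walls $\partial K_1,\partial K_2$ and the segment $[p_1\xi_1,p_2\xi_1]$ are made cheap, the rest of the annulus expensive, and edges outside $K_2$ are left completely free. The factor $5$ is spent on a comparison showing that any geodesic between the two boundaries must travel along that one segment; since any geodesic ray from $x$ must cross both boundaries, any two such rays coincide on the segment. Because all the action happens inside $K_2$, $V$ really is an open set, and the argument is insensitive to what happens far away. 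To pass from ``rays starting at a fixed $x$'' to ``all rays,'' the paper writes $F\subseteq\bigcup_x F(x)\cup\bigcup_m F_m$, where $F_m$ is the set of configurations with two \emph{disjoint} rays starting in $[-m,m]^d$, and makes $F_m$ nowhere dense by taking $p_1>m$ so that every ray starting in $[-m,m]^d$ is forced through $p_1\xi_1$. This countable decomposition over starting boxes is the device that replaces your global $\mathcal R$; it lets one control only finitely many edges at a time yet still cover every geodesic ray. To repair your proof you would need a similar localization: replace ``every geodesic ray uses $f_K$'' by something like ``every geodesic ray that meets a fixed finite box is forced through a prescribed gate,'' and take a countable union over boxes on the meager side rather than an intersection on the residual side.
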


\begin{theorem} For arbitrary $A$, in a residual set of $\Omega$ there exists only a bounded number of distinct geodesic rays in $\mathbb{Z}^d$, more precisely, there are no more than $4d^2$ distinct geodesic rays. \end{theorem}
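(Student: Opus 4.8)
I would prove this by showing that the set
\[
\mathcal B=\{\omega\in\Omega:\ \omega\text{ carries at least }4d^2+1\text{ pairwise distinct geodesic rays}\}
\]
is meager. The first step is a reduction: ``pairwise distinct'' may be replaced by ``pairwise edge-disjoint''. Indeed, given pairwise distinct geodesic rays $\gamma_0,\dots,\gamma_{4d^2}$, there is a finite edge set $E_0$ containing all of their pairwise intersections (a self-avoiding path meets any finite edge set in only finitely many edges), and truncating each $\gamma_i$ beyond its last edge lying in $E_0$ produces pairwise edge-disjoint geodesic rays, since a sub-ray of a geodesic ray is again a geodesic ray; conversely edge-disjoint rays are distinct. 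So $\mathcal B$ is exactly the set of $\omega$ carrying $4d^2+1$ pairwise edge-disjoint geodesic rays. We may also assume $\sup A\le 5\inf A$, for otherwise Theorem~1.2 already gives at most one geodesic ray on a residual set. Since $\sup A>0$, this forces $0<\inf A\le\sup A\le 5\inf A<\infty$; in particular all passage times lie in a fixed compact subinterval of $(0,\infty)$, so every finite geodesic is self-avoiding.

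Next I set up the meagre cover. Write $B_n=[-n,n]^d\cap\mathbb Z^d$ and let $A_m=B_m\setminus\inte B_{\lfloor m/2\rfloor}$ be a discrete annulus; call a finite self-avoiding path a \emph{geodesic segment} if it realises the $T$-distance of its endpoints. For $m\in\mathbb N$ put
\[
\mathcal C_m=\{\omega:\ A_m\text{ contains }4d^2+1\text{ pairwise edge-disjoint geodesic segments, each joining }\partial B_{\lfloor m/2\rfloor}\text{ to }\partial B_m\}.
\]
Because $\sup A/\inf A\le 5$, a geodesic joining two vertices of $B_m$ never leaves $B_{(5d+1)m}$: a path through a vertex of $\ell^\infty$-norm $R$ costs at least $2(R-m)\inf A$, while the coordinatewise-monotone path between the endpoints costs at most $2dm\sup A$. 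Hence ``$P$ is not a geodesic segment'' — i.e.\ some strictly cheaper competing path exists inside $B_{(5d+1)m}$ — is an open condition on $\Omega$, and since only finitely many tuples of candidate crossing paths occur, each $\mathcal C_m$ is closed. On the other hand every geodesic ray is unbounded but has a vertex of smallest norm, hence crosses $A_m$ for all large $m$, the crossing segment being a geodesic segment; therefore $\mathcal B\subseteq\bigcap_{m\ge M}\mathcal C_m=:\mathcal D_M$ once $M$ is large enough depending on $\omega$, so $\mathcal B\subseteq\bigcup_M\mathcal D_M$. As each $\mathcal D_M$ is closed, it suffices to prove that its complement $\mathcal D_M^{\,c}=\bigcup_{m\ge M}\mathcal C_m^{\,c}$, which is open, is dense; then each $\mathcal D_M$ is nowhere dense, $\mathcal B$ is meager, and the residual set $\Omega\setminus\mathcal B$ is exactly what we want.

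Establishing this density is, I expect, \textbf{the main obstacle}. Given a basic open set $V$ — prescribing the passage times of finitely many edges, all inside some $B_N$, to within small neighbourhoods — I would fix $m\ge\max(M,2N)$ and construct $\omega'\in V\cap\mathcal C_m^{\,c}$ by overwriting $\omega$ on (almost) all edges outside $B_N$: set $\omega'\equiv\sup A$ on a large neighbourhood of $A_m$ except along a prescribed family of $\inf A$-valued ``channels'' chosen so that (i) any crossing of $A_m$ avoiding every channel is strictly more expensive than a channel route, hence not a geodesic segment, and (ii) the channels split into at most $4d^2$ groups such that two geodesic crossings running through channels of the same group must share an edge. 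The two factors of $2d$ are meant to account, respectively, for the $2d$ coordinate directions — channels must head toward each of them so that crossings emanating from every face of $\partial B_{\lfloor m/2\rfloor}$ get absorbed — and for an extra local multiplicity of $2d$ needed to keep the groups stable under perturbation and to neutralise the finitely many edges of $B_N$ that one is not free to reset (a thin $\sup A$-screen just outside $B_N$ should help here). The genuinely delicate point is quantitative: when $\sup A$ exceeds $\inf A$ only slightly, a channel diverts only paths passing within a transverse distance of it that is bounded in terms of the width of $A_m$, so the channels must be placed and grouped very carefully, and checking that $4d^2$ of them suffice — and that property (ii) survives for every admissible content of $B_N$ — is the heart of the argument. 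Granting it, $\mathcal D_M$ is closed with dense complement, hence nowhere dense, $\mathcal B$ is meager, and on a residual set of $\Omega$ there are at most $4d^2$ pairwise edge-disjoint, equivalently at most $4d^2$ distinct, geodesic rays.
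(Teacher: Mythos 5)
Your scaffolding is sound and genuinely different from the paper's. The paper keeps the rays as given and covers the bad set by two types of configurations: (i) those in which $2d+1$ of the rays pass through a common lattice point $x$, handled by first proving that residually at most $2d$ distinct geodesic rays emanate from a fixed point; and (ii) those in which no $2d+1$ of the $4d^2+1$ rays share a point, handled by the same $2d$-channel construction plus a pigeonhole. The product $2d\times 2d$ is exactly the source of $4d^2$. You instead truncate distinct rays past the finite edge set $E_0$ of all pairwise shared edges to obtain pairwise \emph{edge-disjoint} geodesic sub-rays (valid: sub-rays of geodesic rays are geodesic rays), and then count pairwise edge-disjoint geodesic crossings of an annulus. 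This sidesteps the paper's case split entirely. Your further reduction to $\sup A\le 5\inf A$ via Theorem~1.2 is also a neat simplification absent from the paper: it forces $0<\inf A\le\sup A<\infty$, keeps all geodesics self-avoiding, and caps the length of competing paths, which is what makes ``is a geodesic segment'' a closed condition and $\mathcal C_m$ closed.

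The step you yourself flag as ``the main obstacle,'' however --- density of $\mathcal D_M^c$ --- is the entire content of the paper's channel construction and is not carried out; without it there is no proof. Moreover, your heuristic for the constant is off. The construction (values near $\inf A$ on $\partial K_1$, $\partial K_2$ and the $2d$ axis-parallel segments $\pm I_i$; values near $\sup A$ on all other edges between the boundaries, with $p_2/p_1$ large) forces every geodesic from $\partial K_1$ to $\partial K_2$ to contain one of the $2d$ pairwise edge-disjoint segments $\pm I_i$ in full. Hence in $V$ there can be at most $2d$ pairwise edge-disjoint geodesic crossings, and with your edge-disjointness reduction you would in fact obtain the \emph{sharper} bound $2d$, not $4d^2$. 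The ``extra local multiplicity of $2d$'' you invoke to ``neutralise the edges of $B_N$'' is unnecessary: for $m$ large those edges lie strictly inside the inner box, disjoint from the annulus, so every annulus edge is free to overwrite, and stability under perturbation is automatic because $V$ is an open cylinder set.

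One concrete technical defect must also be repaired: the annulus $A_m=B_m\setminus\inte B_{\lfloor m/2\rfloor}$ has aspect ratio about $2$, but the quantitative comparison behind the channel construction requires $p_2-p_1$ to exceed roughly $(5+2d)p_1\inf A/\varepsilon$, where $\varepsilon\le\frac{1}{2}(\sup A-\inf A)\le 2\inf A$; so $p_2/p_1$ must exceed a constant depending on $d$ and $A$ that is larger than $2$. Replacing $A_m$ by $B_{cm}\setminus\inte B_m$ for a suitably large constant $c$, or fixing the inner radius while letting only the outer one grow, fixes this.
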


In Section 4, we revisit an old basic result of first passage percolation, that is the existence of the time constants. Precisely, if we consider any vector $x$, then under mild conditions on the distribution, the function $\frac{T(0,tx)}{t}$ has an almost sure limit in $\infty$ which is usually denoted by $\mu(x)$. One may wonder if it also holds in a large subset of $\Omega$ in the topological setup. The following theorem shows the converse. Throughout the paper, for a vector $x\in\mathbb{R}^d$ we denote by $|x|$ the $\ell_1$ norm of $x$, that is the sum of the absolute values of the coordinates of $x$.

\begin{theorem}
Fix any nonzero vector $x$. In a residual subset of $\Omega$, for any $\lambda$ with
\begin{displaymath}
\inf A \leq \lambda \leq \sup A
\end{displaymath}
\noindent there exists a sequence $(\mu_k)_{k=1}^{\infty}$ with $\mu_k\to\infty$ such that
\begin{displaymath}
\lim_{k\to\infty}\frac{T(0,\mu_k x)}{\mu_k |x|}=\lambda.
\end{displaymath}
\end{theorem}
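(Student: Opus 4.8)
The plan is a Baire category argument. Write $a_t(\omega)=T_\omega(0,tx)/(t|x|)$ and let $L(\omega)$ be the set of limit points of the sequence $\bigl(a_t(\omega)\bigr)_{t\in\mathbb N}$. I would first record two a priori bounds valid for \emph{every} $\omega$: any lattice path from $0$ to $(tx)'$ has at least $|(tx)'|$ edges, each of passage time at least $\inf A$; and the monotone coordinatewise path from $0$ to $(tx)'$ has exactly $|(tx)'|$ edges, each of passage time at most $\sup A$. Since $|(tx)'|/(t|x|)\to1$, these give $\inf A\le\liminf_t a_t(\omega)\le\limsup_t a_t(\omega)\le\sup A$, so $L(\omega)$ is always a nonempty closed subset of $[\inf A,\sup A]$. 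Fix a countable $D\subseteq(\inf A,\sup A)$ with $\overline D=[\inf A,\sup A]$, and for $\lambda\in D$, $n\ge1$ let
\[
G_{\lambda,n}=\bigl\{\,\omega\in\Omega:\ |a_t(\omega)-\lambda|<1/n\ \text{ for some }t\ge n\,\bigr\}.
\]
It then suffices to prove that each $\inte G_{\lambda,n}$ is dense: for then $R:=\bigcap_{\lambda\in D,\,n\ge1}\inte G_{\lambda,n}$ is residual, and every $\omega\in R$ satisfies $D\subseteq L(\omega)$, whence $L(\omega)=[\inf A,\sup A]$ by closedness. Since an integer subsequence realizing a limit point is in particular a real sequence tending to $\infty$, this is exactly the assertion of the theorem. (When $\sup A=\infty$ one instead takes $D$ dense in $[\inf A,\infty)$, with no other change.)

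The substance is the density of $\inte G_{\lambda,n}$. Let a basic open cylinder be given, prescribing passage times on a finite edge set $F$; let $M$ bound the prescribed values. Choose $a,b\in A$ with $a<\lambda<b$ --- possible precisely because $\lambda\in(\inf A,\sup A)$, and with $b$ finite even when $\sup A=\infty$ --- and set $\theta=(\lambda-a)/(b-a)\in(0,1)$. Pick a large integer $t\ge n$ and put $\rho=\lceil\theta|(tx)'|\rceil$; for $t$ large, $\rho<|(tx)'|$ and the $\ell_1$-ball $B_\rho=\{v\in\mathbb Z^d:|v|\le\rho\}$ contains $F$. Define $\omega$ by keeping the prescribed values on $F$, assigning passage time $b$ to every other edge lying inside $B_\rho$, and passage time $a$ to every remaining edge; this $\omega$ lies in the cylinder. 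Finally choose $S$ much larger than $|(tx)'|$ (so large that $(S-\rho)a$ exceeds the upper bound computed below whenever $a>0$) and let $\mathcal V$ be the basic open neighbourhood of $\omega$ forcing $\omega$ on every edge inside $B_S$; I claim $\mathcal V\subseteq G_{\lambda,n}$, which makes $\inte G_{\lambda,n}$ dense.

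To see $\mathcal V\subseteq G_{\lambda,n}$, fix $\omega'\in\mathcal V$ and estimate $a_t(\omega')$. For the upper bound, the monotone path $P$ from $0$ to $(tx)'$ lies inside $B_S$, so $\tau_{\omega'}(P)=\tau_\omega(P)$; its first $\rho$ edges lie in $B_\rho$ (cost $b$, except at most $|F|$ of them) and its remaining $|(tx)'|-\rho$ edges touch the complement of $B_\rho$ (cost $a$), so $\tau_{\omega'}(P)\le\rho b+(|(tx)'|-\rho)a+|F|M=(\theta b+(1-\theta)a)|(tx)'|+O(1)=\lambda|(tx)'|+O(1)$, giving $a_t(\omega')\le\lambda+O(1/t)$. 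For the lower bound, since $(tx)'\notin B_\rho$, any path $\Gamma$ from $0$ to $(tx)'$ contains, for each $j=1,\dots,\rho$, a distinct edge at which its $\ell_1$-norm first reaches $j$; each such edge has both endpoints of norm at most $\rho$, hence lies in $B_\rho$, so at least $\rho-|F|$ of them have $\omega'$-passage time $b$, giving $\tau_{\omega'}(\Gamma)\ge(\rho-|F|)b$. If $\Gamma$ stays inside $B_S$ it additionally uses at least $|(tx)'|-\rho-1$ further edges touching the complement of $B_\rho$ (needed to raise the norm from $\rho$ to $|(tx)'|$), each of cost $a$, for a total $\ge\lambda|(tx)'|-O(1)$; if $\Gamma$ leaves $B_S$ and $a>0$ it traverses at least $S-\rho$ such edges and is more expensive than $P$, hence not optimal; and if $a=0$ then $\theta b=\lambda$, so already $(\rho-|F|)b=\lambda|(tx)'|-O(1)$. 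In all cases $T_{\omega'}(0,tx)\ge\lambda|(tx)'|-O(1)$, so $a_t(\omega')\ge\lambda-O(1/t)$. Choosing $t$ large enough that all the $O(1/t)$ terms are $<1/(2n)$ yields $|a_t(\omega')-\lambda|<1/n$, as wanted; since the cylinder was arbitrary, $\inte G_{\lambda,n}$ is dense.

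The step I expect to be the genuine obstacle --- and the reason for using an expensive ball around $0$ rather than the more obvious expensive slab separating $0$ from $(tx)'$ --- is the failure of lower semicontinuity of $\omega\mapsto T_\omega(0,tx)$ when $\inf A=0$. Being an infimum of the continuous maps $\omega\mapsto\tau_\omega(\Gamma)$, this function is only upper semicontinuous, so $G_{\lambda,n}$ need not be open, and a slab barrier can be circumvented arbitrarily far out through edges that a nearby configuration has lowered to passage time near $0$, so the lower bound would not survive passage to a neighbourhood. A ball barrier has the virtue that $B_\rho$ is bounded, all of whose edges lie inside $B_S$ and are therefore pinned down throughout $\mathcal V$, so the unavoidable cost of escaping $B_\rho$ --- which any path to $(tx)'\notin B_\rho$ must incur --- is literally constant on $\mathcal V$. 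The delicate point is the bookkeeping showing that this escape cost $(\rho-|F|)b$, matched against the upper bound from $P$, pins $a_t(\omega')$ to within $1/n$ of $\lambda$ uniformly over $\omega'\in\mathcal V$; this forces the calibration $\rho\approx\theta|(tx)'|$ and, in the borderline subcase $0\notin A$ with $a>0$, the choice of $S$ large enough to kill detours leaving $B_S$.
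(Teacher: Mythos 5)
Your proof is correct and in the end shares the paper's skeleton (a countable decomposition followed by a nowhere-density/dense-interior argument, and a subcylinder on which $T(0,\mu x)/\mu$ is pinned near $\lambda$ via edges of costs near $a<\lambda<b$ in proportion $\theta=(\lambda-a)/(b-a)$); but the geometric realization is genuinely different and worth flagging. The paper fixes a minimal-$\ell_1$ path $\Gamma_0$ from $0$ to $p(\mu,x)$, makes a tuned fraction of \emph{its} edges cheap and every other edge in a large box expensive, and then has to argue separately that a competitor staying in the box, or escaping it, cannot beat $\Gamma_0$ by much. You instead make an $\ell_1$-ball $B_\rho$ of radius $\rho\approx\theta|(tx)'|$ around the origin expensive and the surrounding annulus cheap. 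This buys a cleaner lower bound: the escape cost $\approx\rho b$ is incurred by \emph{every} path from $0$ to $(tx)'\notin B_\rho$ via the first-hitting-norm-$j$ edges, so no case analysis over where the competing path wanders is needed inside $B_S$, and only the coarse ``leaves $B_S$'' case (handled when $a>0$ by taking $S$ huge, and automatic when $a=0$ since escape already costs $\rho b\approx\lambda|(tx)'|$) remains. Your framing via $\inte G_{\lambda,n}$ rather than directly proving the complement nowhere dense is equivalent and perfectly fine, and your observation about upper-but-not-lower semicontinuity of $\omega\mapsto T_\omega(0,tx)$ correctly identifies why a compact barrier is needed.

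One wording slip, not a conceptual gap, should be fixed: ``the basic open neighbourhood of $\omega$ forcing $\omega$ on every edge inside $B_S$'' is not open unless $A$ is discrete, and similarly ``keeping the prescribed values on $F$'' should read ``choosing some admissible value in $U_e$ for each $e\in F$.'' You should instead let $\mathcal V$ be a cylinder whose projection to each edge $e$ inside $B_S$ is a small open set $(\omega_e-\varepsilon_e,\omega_e+\varepsilon_e)\cap A$ with $\sum\varepsilon_e\le 1$, say, contained in $U_e$ when $e\in F$. All your exact equalities ($\tau_{\omega'}(P)=\tau_\omega(P)$, ``have $\omega'$-passage time $b$'') then become approximations with a total additive error bounded by $\sum\varepsilon_e\le 1$, which is absorbed by the same $O(1)$ you already carry and vanishes after dividing by $t$. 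With that repaired the proof is complete.
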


A related fundamental result is the Cox--Durrett shape theorem. Let us denote by $B(t)$ the ball of radius $t$ centered at the origin in the pseudometric $T$, that is the subset of $\mathbb{R}^d$ we might reach from the origin in time $t$. A truly interesting result of the theory (see \cite{CD}) is that there exists a so-called limit shape $B_\mu$, which has the property that as $t$ tends to infinity, with probability one $\frac{B(t)}{t}$ tends to $B_\mu$ in some sense. Various works can be found in the literature based on this theorem about the speed of this convergence for example. We might ask if a similar statement holds in a residual set in the topological setup. Our next theorem points out it is quite far from the truth under certain, sadly nontrivial conditions on $A$. Let us denote by $D_r$ the $\ell_1$ closed ball of radius $r$ centered at 0, and let $\mathcal{K}_A^d$ be the set of connected compact sets in $\mathbb{R}^d$ satisfying
\begin{displaymath}
D_\frac{1}{\sup A} \subseteq K \subseteq D_\frac{1}{\inf A},
\end{displaymath}
\noindent where the leftmost set is replaced by $\{0\}$ if $\sup A =\infty$, and the rightmost set is replaced by $\mathbb{R}^d$ if $\inf A = 0$. Furthermore, we say that $K\in\mathcal{P}_A^d$ if $K\in\mathcal{K}_A^d$, and there exists $\alpha_K>0$ such that for each $x\in{K}$ there is a "topological path" in $K$ of $\ell_1$-length at most $\frac{1}{\inf A}-\alpha_K$ from $0$ to $x$. (From now on, we use the terms path and topological path in order to clearly distinguish paths in graph theoretical sense and paths in topological sense.) Its closure in $\mathcal{K}_A^d$ with respect to the Hausdorff metric is simply denoted by $\overline{\mathcal{P}_A^d}$. In Section 5, we prove the following:

\begin{theorem}
Assume that $\inf A =0$, or $\sup A=\infty$. Then in a residual subset of $\Omega$ for any $K\in\overline{\mathcal{P}_A^d}$ there exists a sequence $(t_n)_{n=1}^{\infty}$ which tends to infinity and
\begin{displaymath}
\frac{B(t_n)}{t_n} \to K
\end{displaymath}
\noindent in the Hausdorff metric.
\end{theorem}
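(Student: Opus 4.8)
\emph{Sketch of the argument.} The idea is to build one residual set that serves every $K$ simultaneously. The space of nonempty compact subsets of $\mathbb{R}^d$ under the Hausdorff metric is separable, hence so are $\overline{\mathcal{P}_A^d}$ and its dense subset $\mathcal{P}_A^d$; fix a countable $\mathcal{D}\subseteq\mathcal{P}_A^d$ dense in $\overline{\mathcal{P}_A^d}$. For $K\in\mathcal{D}$ and $m,N\in\mathbb{N}$ put
\begin{displaymath}
U^K_{m,N}=\bigcup_{t\in\mathbb{N},\,t>N}\Bigl\{\omega:\ d_H\bigl(\tfrac{B_\omega(t)}{t},K\bigr)<\tfrac1m\Bigr\}.
\end{displaymath}
A diagonal extraction shows that any $\omega\in\bigcap_{m,N}U^K_{m,N}$ admits a sequence $t_n\to\infty$ with $B_\omega(t_n)/t_n\to K$. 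Hence it is enough to prove that every $\inte U^K_{m,N}$ is dense: then $G:=\bigcap_{K\in\mathcal{D}}\bigcap_{m,N}\inte U^K_{m,N}$ is residual, and for an arbitrary $K\in\overline{\mathcal{P}_A^d}$ and $\omega\in G$ one approximates $K$ by members $K_j\in\mathcal{D}$ and diagonalises the sequences provided by $\omega\in\bigcap_{m,N}U^{K_j}_{m,N}$.

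So fix a basic open set $W\subseteq\Omega$ constraining finitely many edges $e_1,\dots,e_k$, and fix $m,N$. I would pick a large integer $t>N$ and construct $\omega^\ast\in W$ whose values outside a large box $D_{Rt}$ (with $R$ depending only on $K$ and $A$) are set arbitrarily — they will not matter — as follows. First, a \emph{confining layer}: an annular region of $\ell_1$-width greater than $2t/\sup A$ enclosing everything else — a single layer of edges if $\sup A=\infty$ — carrying weight $\sup A$ on its edges if $\sup A<\infty$ and a common weight exceeding $2t$ if $\sup A=\infty$. Then every $\tau_{\omega^\ast}$-path from $0$ of passage time at most $2t$ is trapped inside $D_{Rt}$, so $B_{\omega^\ast}(t)$ depends on only finitely many edges, and in the case $\sup A=\infty$ the layer is outright impassable within budget $t$. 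Second, a \emph{fast region} around the origin on which $\omega^\ast$ is everywhere close to $\inf A$, shaped so that on one hand the topological paths of $\ell_1$-length at most $\tfrac1{\inf A}-\alpha_K$ guaranteed by $K\in\mathcal{P}_A^d$ admit lattice approximations inside it of $\tau_{\omega^\ast}$-passage time at most $t(1-\alpha_K\inf A)+o(t)<t$ — so that $B_{\omega^\ast}(t)$ is $\delta t$-dense in $tK$ — and on the other hand (when $\sup A<\infty$, where the confining layer only slows paths down rather than stopping them) its $t/\sup A$-dilation is no larger than $(tK)_{\delta t}$; when $\sup A=\infty$ one simply takes a thickening of $tK$. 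All other edges of $D_{Rt}$ receive weight $\sup A$ (a large value if $\sup A=\infty$), so a lattice point at $\ell_1$-distance $\rho$ from the fast region can only be reached by traversing at least $\rho$ such edges, at cost at least $\rho\sup A$; combined with the previous point this confines $B_{\omega^\ast}(t)$ to $(tK)_{\delta t}$. For $\delta$ small and $t$ large this gives $d_H(B_{\omega^\ast}(t)/t,K)<\tfrac1m$.

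Two routine points complete the density step. For $t$ large the constrained edges $e_1,\dots,e_k$ lie inside the fast region, where they are at most $k$ bounded ``slow spots'' near the origin; as $d\geq 2$ the finitely many paths used above can be rerouted around them at $O(1)$ extra cost, negligible against $t$, so $\omega^\ast$ can be kept in $W$. And to obtain an open set rather than a point, shrink the constraint intervals of all edges of $D_{Rt}$ about their $\omega^\ast$-values to a width $\eta$ so small that on the resulting open set $\widetilde W\subseteq W$ all passage times inside $D_{Rt}$ change by at most $o(\delta t)$ while the confining layer still blocks every escape of passage time $\leq 2t$; this preserves the Hausdorff estimate, so $\widetilde W\subseteq\inte U^K_{m,N}$, and density follows since $W$ was arbitrary.

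The real difficulty is concentrated in the choice of the fast region, i.e.\ in the geometry that makes the construction faithful: one needs the ``fast region plus $\sup A$-fill'' to reproduce $K$ up to an arbitrarily small Hausdorff error \emph{while} the fast region is crossable in time strictly below $t$. When $\sup A<\infty$ this forces the fast region to be essentially $t$ times the erosion $K\ominus D_{1/\sup A}$, the subsequent dilation by the $\sup A$-fill recovering $tK$ only when $K$ is, up to arbitrarily small perturbation, the morphological opening of itself by $D_{1/\sup A}$; this regularity, together with the fast reachability of the eroded core, is exactly what membership in $\mathcal{P}_A^d$ encodes (hence the role of the slack $\alpha_K$), and passing to the Hausdorff closure $\overline{\mathcal{P}_A^d}$ is what lets the limiting shapes be caught too. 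It is likewise here that the hypothesis ``$\inf A=0$ or $\sup A=\infty$'' is indispensable: if $\sup A=\infty$ the confining layer is a genuine barrier, no overshoot occurs, and $K$ itself is realised directly, while if $\sup A<\infty$ one must tolerate an overshoot of order $t/\sup A$, which is only possible because $\inf A=0$ makes the fast region arbitrarily fast and because the preliminary erosion anticipates precisely that overshoot. If both $\inf A>0$ and $\sup A<\infty$, neither device is available — $B(t)/t$ is then trapped between $D_{1/\sup A}$ and $D_{1/\inf A}$ with no further room — and the statement genuinely fails.
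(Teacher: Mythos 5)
Your overall framework --- reduce to a countable dense family of $K$'s by separability, show each $\inte U^K_{m,N}$ is dense by building a suitable cylinder set inside an arbitrary basic open $W$, and pass from $\tilde B(t)$ to $B(t)$ at the end --- matches the paper's, and your treatment of the case $\sup A=\infty$ (an impassable confining barrier, a fast region close to $tK$, with the paths of $\ell_1$-length at most $\frac{1}{\inf A}-\alpha_K$ supplying the needed slack) is essentially the paper's cases~(i) and~(iii). The genuine gap is in the remaining case $\sup A<\infty$ (hence $\inf A=0$).

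There you want the fast region to be (approximately) $t(K\ominus D_{1/\sup A})$ so that the unavoidable $t/\sup A$-overshoot through the $\sup A$-fill dilates it back to $tK$, and you assert this works because ``$K$ is, up to arbitrarily small perturbation, the morphological opening of itself by $D_{1/\sup A}$; this regularity \ldots\ is exactly what membership in $\mathcal{P}_A^d$ encodes (hence the role of the slack $\alpha_K$).'' But when $\inf A=0$ the bound $\frac{1}{\inf A}-\alpha_K$ is infinite, so the defining condition of $\mathcal{P}_A^d$ is vacuous; $\mathcal{P}_A^d$ then contains every path-connected member of $\mathcal{K}_A^d$ and imposes no opening regularity at all, while the slack $\alpha_K$ is meaningful only in the complementary case $\inf A>0$, where the barrier is impassable and no erosion/dilation is needed. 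Concretely, take $d=2$, $A=[0,1]$, $K=D_1\cup[0,3\xi_1]$. Then $K\in\mathcal{P}_A^2$, yet $K\ominus D_1=\{0\}$, so the opening of $K$ is $D_1$, at Hausdorff distance $2$ from $K$, and no small perturbation fixes this. Your fast region must either reach the tip of the spike (but then the $t/\sup A$-dilation through the fill makes $B(t)/t$ bulge sideways around the spike by $\sim 1/\sup A$, a fixed macroscopic error) or stay inside the erosion (and then the spike is unreachable within budget $t$). Either way $d_H(B(t)/t,K)$ stays bounded below, so the density step fails for this $K$.

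The paper escapes this with a device you have not anticipated: in its case~(ii) the whole fast region is \emph{not} cheap. It places \emph{expensive} edges (close to $\sup A$) on a central core $D_{\frac{n}{\sup A}-N}$ sitting inside $nH_n$, cheap edges (close to $0$) only on the rest of the discretised shape $nH_n$, and expensive edges on a \emph{thin} annulus of fixed width $2N$ around $nH_n$ (not throughout $D_{Rt}$). Any path that escapes $nH_n$ must first cross the core, spending budget roughly $n-N\sup A$, so by the time it reaches $\partial(nH_n)$ it has only $O(N\sup A + C)$ budget left --- an overshoot of $O(N)=O(1)$ lattice steps, not $t/\sup A$. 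This is precisely what lets $\tilde B(n)$ hug $nH_n$, hence $nK$, with an error that vanishes after rescaling by $n$, even when $K$ has thin protruding pieces. You need this idea, or a substitute for it, before the case $\sup A<\infty$ can be regarded as proved.
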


\section{Finite geodesics}

At the beginning of this section, it might be useful to say a few words about what happens if we allow negative passage times. In this case, it is plain to see that apart from a nowhere dense set of $\Omega$, the passage time between any two points would be $-\infty$. To verify this, we declare at this point how we will think about the topology on $\Omega$. The most convenient way for us is to consider cylinder sets as the basis of the topology, that is the basis sets are of the form
\begin{displaymath}
U=\times_{e\in E}U_e,
\end{displaymath}
\noindent where each $U_e$ is open in $A$ and with at most finitely many exceptions $U_e=A$. We say that $U_e$ is the projection of $U$ to the edge $e$.

Using this, we can easily verify our previous claim. We need that if $U$ is a nontrivial open set of $\Omega$, then there exists a nontrivial open set $V\subseteq{U}$ such that on $V$, the passage time between any two points is $-\infty$. It clearly suffices to show this for a cylinder set $U$, which is rather straightforward: as there are only finitely many edges for which $U$ has nontrivial projection, we can choose an edge $e$ with trivial projection. Then we define $V$ to have the same projections everywhere as $U$, except for $e$, where the projection contains only negative values. Then in any configuration in $V$, the passage time between two lattice points $x,y$ is $-\infty$: indeed, we can take paths of arbitratily low passage time by going to one of the vertices of $e$ from $x$ on a fixed route, then go along it back and forth as many times as we wish, and then finally go to $y$ on a fixed route. The first and the last part of this path has a fixed passage time in a given configuration, while the middle term can be arbitrarily low. Thus apart from a nowhere dense set of $\Omega$, the passage time between any two lattice points is $-\infty$ indeed, and it quickly yields the same for any two points.

One may wonder what happens if we allow negative passage times, but we only permit self-avoiding paths, except for that the starting and the ending point of a path may coincide. This restriction clearly rules out our previous argument, however, we might expect that passage times are still $-\infty$ in a considerably large set if $d\geq{2}$. (If $d=1$, we have only one possible path between any two vertices, thus in a reasonably small open set there are vertices whose $T$-distance is quite well determined. As a consequence, it is something we are not interested in.) The following theorem shows that the above expectation is true.

\begin{theorem} Suppose that $A$ contains a negative value, and we define $T(x,y)$ by considering the infimum only for the paths which might contain only their starting point and endpoint twice. Then in a residual subset of $\Omega$ we have $T(x,y)=-\infty$ for any two points $x,y$. \end{theorem}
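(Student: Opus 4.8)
The plan is to fix, for each ordered pair of lattice points $x,y\in\mathbb{Z}^d$ and each $N\in\mathbb{N}$, the set
\[
G_{x,y,N}=\{\omega\in\Omega:\ T_\omega(x,y)<-N\},
\]
and to prove that every $G_{x,y,N}$ is open and dense in $\Omega$. There are only countably many such sets, their intersection is precisely the set of configurations for which $T_\omega(x,y)=-\infty$ for all pairs of lattice points, and since $T(x,y)$ for arbitrary $x,y\in\mathbb{R}^d$ is by definition the value at the associated lattice points, on that intersection $T_\omega(x,y)=-\infty$ for all $x,y\in\mathbb{R}^d$. A countable intersection of open dense sets being residual, this proves the theorem. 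Throughout I assume $d\ge 2$; the preceding discussion explains why $d=1$ is uninteresting, and $d\ge 2$ will be genuinely used.

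Openness is immediate: if $T_\omega(x,y)<-N$ there is an admissible path $\Gamma$ from $x$ to $y$ (self-avoiding, except that its two endpoints may coincide) with $\tau_\omega(\Gamma)<-N$, and the functional $\omega'\mapsto\tau_{\omega'}(\Gamma)=\sum_{e\in\Gamma}\tau_{\omega'}(e)$ depends on finitely many coordinates and is continuous, so $\tau_{\omega'}(\Gamma)<-N$, hence $T_{\omega'}(x,y)<-N$, on a whole cylinder neighbourhood of $\omega$. For density, let $U=\times_{e\in E}U_e$ be a nonempty cylinder set and let $F$ be the finite set of edges with $U_e\ne A$. Fix a negative element $v\in A$, pick $a_e\in U_e$ for $e\in F$, and let $\omega\in U$ be the configuration with $\omega(e)=a_e$ for $e\in F$ and $\omega(e)=v$ for $e\notin F$; set $C=\sum_{e\in F}|a_e|$. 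The key step is to replace the forbidden ``back-and-forth on a single edge'' trick by a single \emph{long} admissible path: by the standard combinatorial fact that in $\mathbb{Z}^d$ with $d\ge 2$ there exist self-avoiding paths between any two prescribed vertices of arbitrarily large length — and, when $y=x$, arbitrarily long self-avoiding cycles through $x$ — choose such a path $\Gamma_m$ from $x$ to $y$ of length at least $m$. Since at most $|F|$ of its edges lie in $F$ and $v<0$,
\[
\tau_\omega(\Gamma_m)\ \le\ (|\Gamma_m|-|F|)\,v+C,
\]
and the right-hand side tends to $-\infty$ as $m\to\infty$. Choosing $m$ large enough yields $T_\omega(x,y)\le\tau_\omega(\Gamma_m)<-N$, so $\omega\in U\cap G_{x,y,N}$, which proves that $G_{x,y,N}$ is dense.

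I do not expect a serious obstacle here; the single point requiring care — and the whole reason the statement is not trivial — is the admissibility constraint, which is exactly what invalidates the $T(x,y)=-\infty$ argument used earlier in the section. The construction sidesteps it by loading negative weights onto one genuinely self-avoiding path (or cycle) whose length is allowed to grow, so that the bounded contribution $C$ of the finitely many constrained edges is swamped by the unboundedly negative bulk. The only external ingredient is the elementary fact that $\mathbb{Z}^d$ with $d\ge 2$ carries arbitrarily long self-avoiding paths between prescribed endpoints (and arbitrarily long self-avoiding cycles through a prescribed vertex); this is where $d\ge 2$ enters, and it is precisely the feature that fails for $d=1$.
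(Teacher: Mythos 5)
Your proof is correct and follows essentially the same route as the paper's: the same countable decomposition into sets of configurations with $T(x,y)<-N$, the same reduction to density in a cylinder set, and the same key construction of loading a fixed negative value of $A$ onto an arbitrarily long self-avoiding path (which exists for $d\ge 2$) so that the bounded contribution from the finitely many constrained edges is overwhelmed. The only cosmetic differences are that you observe $G_{x,y,N}$ is open (and so prove density of a single configuration) where the paper directly exhibits a sub-cylinder disjoint from the complement, and you explicitly mention the cycle case $x=y$.
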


\begin{proof} As the passage time between any two non-lattice points equals the passage time between certain lattice points, it suffices to prove that in a residual subset of $\Omega$ we have $T(x,y)=-\infty$ for any two lattice points $x,y$. Denote the subset where this holds by $S$. Furthermore, let us denote the set of configurations satisfying $T(x,y)<-n$ for some $n\in\mathbb{N}$ by $S(x,y,n)$. Using this notation, we have
\begin{displaymath}
S=\bigcap_{x,y\in\mathbb{Z}^d,\text{ } n\in\mathbb{N}}S(x,y,n),
\end{displaymath}
\noindent which is a countable intersection. As a consequence, it suffices to prove that each $S(x,y,n)$ is residual. By definition, this is equivalent to $Q(x,y,n)=\Omega\setminus S(x,y,n)$ is meager. In fact, we will prove that $Q(x,y,n)$ is nowhere dense. Fix $U$ to be a cylinder set. Let us denote the set of edges belonging to nontrivial projections of $U$ by $E_U=\{e_1, e_2, ..., e_k\}$. By shrinking the projections $U_{e_1}, ..., U_{e_k}$, we can achieve that all of them are bounded in $\mathbb{R}$. Denote these new projections by $U'_{e_i}$, $i=1,...,k$, and the cylinder set defined by them by $U'$. Then for any configuration in $U'$, the sum of passage times over the edges $e_1,...,e_k$ is bounded by a constant $C$. 

Let $a\in{A}$ be negative. Note that we might construct a self-avoiding path from $x$ to $y$ of arbitrarily large $\ell^1$ length, or in other words, of arbitrarily large number of edges. Indeed, we can go arbitrarily far along the direction of one axis, and then if we forget about these edges, the remaining graph is still connected as $d\geq{2}$. Thus we might consider a path $\Gamma$ from $x$ to $y$ with length $m$ for large enough $m$. We determine $m$ later. Now we define $V\subseteq{U'}$ to have the same projections as $U'$, except for the edges in $\Gamma\setminus{E_U}$: here we define the projections to be a subset of $(-\infty,\frac{a}{2})$. In $V$, we can bound the passage time of $\Gamma$ as it follows:
\begin{displaymath}
\sum_{e\in\Gamma}t(e)=\sum_{e\in\Gamma\setminus E_U}t(e)+\sum_{e\in\Gamma\cap E_U}t(e)\leq (m-k)\frac{a}{2}+C<-n,
\end{displaymath}
\noindent if $m$ is large enough, as $\frac{a}{2}<0$ and $k,C$ are fixed. Thus the configurations in $V$ cannot be in $Q(x,y,n)$, yielding $Q(x,y,n)$ is nowhere dense, which is what we wanted to prove. \end{proof}

In the sequel, we will return to the case when $A$ contains only nonnegative numbers. First, we prove Theorem 1.1.

\begin{proof}[Proof of Theorem 1.1] Consider the first statement. We will prove that for given $x,y\in\mathbb{Z}^d$, apart from a nowhere dense set of $\Omega$, there exists a geodesic between $x$ and $y$. As there are countably many such pairs, it would be sufficient. The idea of the proof is that typicially the paths with reasonably low passage times lie in a bounded set containing $x$ and $y$, thus if we are interested in $T(x,y)$, we have to consider only finitely many paths, hence the infimum is the minimum. 

To verify our claim, fix lattice points $x,y$ and a cylinder set $U$. Let us denote the set of edges belonging to nontrivial projections of $U$ by $E_U=\{e_1, e_2, ..., e_k\}$. As in the proof of Theorem 2.1, we can construct a smaller cylinder set by shrinking the projections $U_{e_1}, ..., U_{e_k}$, such that all of these projections are bounded in $\mathbb{R}$. We denote again these new projections by $U'_{e_i}$, $i=1,...,k$, and the cylinder set defined by them by $U'$. Then for any configuration in $U'$, the sum of passage times over the edges $e_1,...,e_k$ is bounded by a constant $C$. Choose an $a \in A$ such that $a>0$. We will fix an $n\in\mathbb{N}$ later. Consider all the edges with $\ell^1$ distance at most $n$ from $x$. Denote their set by $E^*$. If $n$ is large enough, there is an optimal $\ell^1$ path from $x$ to $y$ using only edges in $E^*$. We will define $V\subseteq{U'}$ as a cylinder set which has nontrivial projections to the edges in $E_U\cup E^*$. Concerning the edges in $E_U$, we define $V$ to have the same projections as $U'$, meanwhile for the edges in $E^*\setminus E_U$, we require that the projections of $V$ equal $(a-\varepsilon,a+\varepsilon)\cap{A}$, where $0<\varepsilon<a$. Consider now any configuration in $V$, and take a path $\Gamma$ from $x$ to $y$ with $\ell_1$-length $|x-y|$, using only edges in $E^*$. Then its passage time is at most $Ck+|x-y|(a+\varepsilon)=C_1$, a constant independent from the actual configuration in $V$. Meanwhile if we consider any path from $x$ to $y$ which uses an edge which is not in $E^*$ its passage time is at least $(n-k)(a-\varepsilon)$ for any configuration in $V$, as it has to use at least $n$ edges to leave $E^*$, and apart from the at most $k$ edges in $E_U$ they have passage time at least $a-\varepsilon$. However, for large enough $n$, this passage time eventually surpasses $C_1$. As a consequence, if we define $E^*$ and then $V$ using this $n$, we will know that for any configuration in $V$, the paths leaving $E^*$ have passage times higher than the passage time of $\Gamma$. Hence in the definition of $T(x,y)$, we have to consider only the paths connecting $x,y$ which use only edges in $E^*$. There are finitely many of them, thus in fact the infimum is the minimum, yielding we have a geodesic between $x,y$ in $V$. As a consequence, as we claimed, there exists a geodesic between $x$ and $y$ apart from a nowhere dense set of $\Omega$. 

What remains to prove is the uniqueness part of Theorem 1.1. It suffices to prove that for given lattice points $x,y$, apart from a nowhere dense set there is a unique geodesic between $x$ and $y$. Fix a cylinder set $U$. By the previous argument, we know that there exists of a cylinder set $V\subseteq{U}$ such that in $V$, there is a geodesic between $x$ and $y$. We will shrink this cylinder set further to arrive at a cylinder set $W$ in which there is always a unique geodesic between $x$ and $y$. In order to do so, define for each path $\Gamma$ connecting $x$ and $y$ the number $\tau(\Gamma,V)$ as the infimum of passage times of $\Gamma$ for configurations in $V$. Let $\tau(V)=\inf_\Gamma\tau(\Gamma,V)$. By the definition of $V$, this is determined by finitely many paths from $x$ to $y$ in fact, as for any configuration in $V$, the too long paths have too large passage times. Thus $\tau(V)$ equals a minimum, and in the sequel, we might focus only on these paths. Let $\Gamma_0$ be one of the paths for which $\tau(\Gamma_0,V)=\tau(V)$. It would be nice to have a unique path with this property: from this point, the construction of $W$ would be more or less straightforward. We claim that for an appropriate $V'\subseteq{V}$ we can have $\tau(\Gamma_0,V')=\tau(V')=\tau(V)$ while for any $\Gamma\neq\Gamma_0$ we have $\tau(\Gamma,V')>\tau(V')$. Indeed, if we define $V'$ to have the same projections as $V$ to the edges contained by $\Gamma_0$, we immediately have our first requirement. Furhermore, if $\Gamma\neq\Gamma_0$ with $\tau(\Gamma,V)=\tau(V)$, there is at least one edge $e\in\Gamma\setminus\Gamma_0$. We will shrink the projection to this edge: as $A$ has no isolated points, we can choose some nonempty $V'(e)\subseteq V(e)\cap A$ with higher infimum than $\inf V(e)$, which results in $\tau(\Gamma,V')>\tau(\Gamma,V)\geq\tau(V')$. Repeating the same step for each $\Gamma\neq\Gamma_0$ with $\tau(\Gamma,V)=\tau(V)$ (which means only finitely many steps) we obtain some $V'$ with the above property.

In the final step we will only shrink the projections of $V'$ to the edges in $\Gamma_0$. As $\tau(\Gamma,V')>\tau(\Gamma_0,V')$ for any $\Gamma_0\neq\Gamma$, and there are only finitely many paths we are interested in by now, for some $\varepsilon>0$ we have $\tau(\Gamma,V')>\tau(\Gamma_0,V')+\varepsilon$. We will shrink the projections of $V'$ to the edges in $\Gamma_0$ based on this bound. Namely, if $\Gamma_0$ contains the edges $e_1', ..., e_m'$, and the infimum of $V'(e_i')$ is $a_i$, we will define $W(e_i)$ as $\left(a_i,a_i+\frac{\varepsilon}{m}\right)\cap A$. Then as
\begin{displaymath}
\tau(\Gamma_0,V')\geq\sum_{i=1}^{m}a_i,
\end{displaymath}
\noindent we have that for any configuration $\omega\in W$ the passage time of $\Gamma_0$ is at most
\begin{displaymath}
\tau_{\omega}(\Gamma_0)\leq\sum_{i=1}^{m}\left(a_i+\frac{\varepsilon}{m}\right)\leq\tau(\Gamma_0,V')+\epsilon<\tau(\Gamma,V')\leq\tau_\omega(\Gamma)
\end{displaymath}
\noindent for any $\Gamma\neq\Gamma_0$, as a configuration in $W$ is also in $V'$, hence $\tau(\Gamma,V')\leq\tau_\omega(\Gamma)$. Thus for any configuration in $W$, the unique geodesic from $x$ to $y$ is $\Gamma_0$. This concludes the proof. \end{proof}

\begin{megj} In the proof we clearly used that $A$ has no isolated points to be able to nontrivially shrink open sets in $A$. By a similar argument, one can quickly check that if $A$ has an isolated point $a$, then for any two lattice points $x,y$ such that the line segment $[x,y]$ is not parallel to any of the coordinate axis (i.e. there are multiple optimal $\ell_1$ paths from $x$ to $y$), there exists a cylinder set $U$ such that for any configuration in $U$ there are multiple geodesics from $x$ to $y$. Indeed, we can define $U$ to have projections containing only $a$ to the set of edges within a given large $\ell_1$ distance to $[x,y]$, similarly to the definition of $V$ in the previous proof. Then it is easy to see that the geodesics between $x$ and $y$ are precisely the optimal $\ell_1$ paths, of which there are more than one. \end{megj}

\section{Infinite geodesics}

\begin{proof}[Proof of Theorem 1.2] First we will prove that if $x$ is a given lattice point then apart from a nowhere dense set of $\Omega$ there is no more than one geodesic ray starting from $x$. Clearly it suffices to prove this claim concerning geodesic rays starting from the origin. Let $F(0)$ denote the set of configurations in which there are at least two distinct geodesic rays starting from the origin. Then $F(0)=\bigcup_{m=1}^{\infty}F_m(0)$ where $F_m(0)$ stands for the set of configurations in which there are at least two distinct geodesics starting from the origin such that they have at most $m$ edges in common. We claim that for any $m$ we have that $F_m(0)$ is a nowhere dense set in $\Omega$, which would verify our preliminary statement about the meagerness of $F(0)$.

As usual, fix $U$ to be a cylinder set, and denote the set of edges belonging to nontrivial projections of $U$ by $E_U=\{e_1, e_2, ..., e_k\}$. As in the proof of Theorem 2.1, we can construct a smaller cylinder set by shrinking the projections $U_{e_1}, ..., U_{e_k}$, such that all of these projections are bounded in $\mathbb{R}$. We denote again these new projections by $U'_{e_i}$, $i=1,...,k$, and the cylinder set defined by them by $U'$. Then for any configuration in $U'$, the sum of passage times over the edges $e_1,...,e_k$ is bounded by a constant $C$. Consider now the hypercubes $K_1=[-p_1,p_1]^d$ and $K_2=[-p_2,p_2]^d$, where we choose $p_1\in\mathbb{N}$ such that the interior of $K_1$ contains all the edges of $E_U$, while the precise value of $p_2>p_1$ is to be determined later. Let us denote the set of edges in $K_2$ which are not in the interior of $K_1$ by $E^*$. We will define $V\subseteq{U'}$ as a cylinder set which has nontrivial projections to the edges in $E_U\cup E^*$. The idea is the following: for the configurations in $V$ we would like to have essentially one (and the same) geodesic from the boundary $\partial K_1$ to the boundary $\partial K_2$, for example the line segment connecting $p_1\xi_1$ and $p_2\xi_1$. (We recall that $\xi_1$ is the first coordinate vector in $\mathbb{R}^d$.) By this we mean that for any lattice points $x_1\in \partial K_1$ and $x_2 \in\partial K_2$, a geodesic $\Gamma$ from $x_1$ to $x_2$ eventually arrives in $p_1\xi_1$, and then it goes along the line segment $[p_1\xi_1,p_2\xi_1]$. If we could achieve this, we would be done: as any geodesic ray starting from the origin eventually leaves $K_1$ and $K_2$, and a geodesic ray is a geodesic between any two of its points, the previous properties would guarantee that any geodesic ray starting from the origin would go along the line segment $[p_1\xi_1,p_2\xi_1]$. However, that would mean that our configuration cannot be in $F_m(0)$ for $p_2-p_1>m$ as there would not exist at least two distinct geodesics starting from the origin such that they have at most $m$ edges in common.

Let us make the above argument rigorous. Let $\varepsilon>0$ be such that $5(\inf A + \varepsilon)<\sup A - \varepsilon$ still holds. We would like to have small passage times on the edges of $\partial K_1$, $\partial K_2$, and along the line segment $[p_1\xi_1,p_2\xi_1]$ to guarantee a path with considerably low passage time between any two points of $\partial K_1$ and $\partial K_2$. We call these edges cheap. Meanwhile on other edges between the two boundaries (e.g. the expensive edges) we would like to have as large passage times as possible. Thus for every cheap edge $e$ we define 
\begin{displaymath}
V_e=[\inf A, \inf A + \varepsilon_e ) \cap A,
\end{displaymath}
\noindent where the $\varepsilon_e$s are defined such that their sum for cheap edges is at most $\varepsilon$. Meanwhile for any expensive edge we define
\begin{displaymath}
V_e=(a-\varepsilon_e, a+ \varepsilon_e ) \cap A,
\end{displaymath}
\noindent where $a\in{A}$ is chosen such that $a>5(\inf A + \varepsilon)$ and again the $\varepsilon_e$s are defined such that their sum for expensive edges is at most $\varepsilon$. By this, we have formally defined $V$. Now consider any configuration in $V$. Our aim is to prove that if $\Gamma$ is a path from some $x_1\in\partial K_1$ to some $x_2\in\partial K_2$, and it does not contain the line segment $[p_1\xi_1,p_2\xi_1]$, then it cannot be a geodesic. Proceeding towards a contradiction, assume the existence of $x_1\in\partial K_1$ and $x_2\in\partial K_2$ such that the geodesic $\Gamma$ from $x_1$ to $x_2$ does not contain the line segment $[p_1\xi_1,p_2\xi_1]$. As $\Gamma$ is a geodesic between any two points of it, we might suppose that $x_1$ is its only point on $\partial K_1$ and $x_2$ is its only point on $\partial K_2$: otherwise we might replace $\Gamma$ by a subpath of it. Consider first the case when $\Gamma$ does not share any edge with the line segment $[p_1\xi_1,p_2\xi_1]$. Then $\Gamma$ must contain at least $|x_2-x_1|$ expensive edges, which gives the following bound for the passage time:
\begin{displaymath}
\tau(\Gamma)\geq 5|x_2-x_1|(\inf A + \varepsilon)-\varepsilon.
\end{displaymath}
\noindent We will construct another path $\Gamma_0$ from $x_1$ to $x_2$, which uses only cheap edges. (See Figure 1 for $d=2$.) First, we go from $x_1$ to $p_1\xi_1$ on $\partial K_1$ using the shortest possible way in $\ell_1$. It is simple to see that this part requires at most $p_1+2dp_1$ edges: if needed, using a segment of length $p_1$ we might arrive at a facet which is neighboring to the one containing $p_1\xi_1$, and then we do not need more edges than the $\ell_1$ diameter of $K_1$, which is $2dp_1$. Now we proceed to $p_2\xi_1$ along the line segment $[p_1\xi_1,p_2\xi_1]$, this step clearly requires $p_2-p_1$ edges. Finally, we go to $x_2$ on $\partial K_2$ once again using the shortest possible way in $\ell_1$.
\begin{figure}[h!]
  \includegraphics[width=250pt]{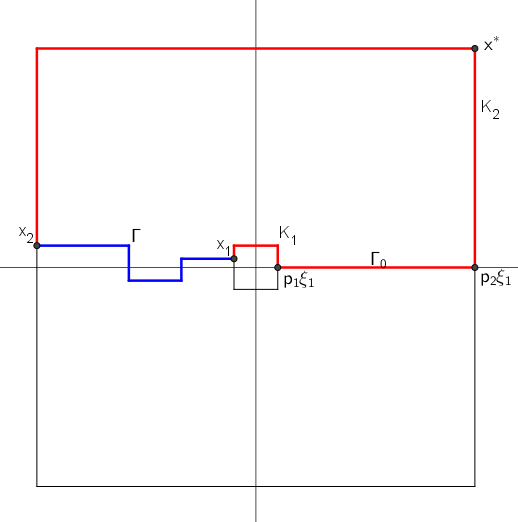}
  \caption{The case when $\Gamma$ and $[p_1\xi_1,p_2\xi_1]$ have no common edges for $d=2$}
  \label{abra1}
\end{figure}

\noindent The number of edges needed in this final step can be bounded the following way: if needed, using a segment of length $p_2$ we might arrive at a facet which is neighboring to the one containing $x_2$ in a point $x^*$. From here, we can get to $x_2$ by an optimal $\ell_1$ path whose are edges contained by $\partial K_2$: indeed, on the boundary of a cube any two points lying on neighboring facets are connected by such an optimal path, as it is unnecessary to take steps in opposite directions. Thus in this step, we need at most $p_2+|x_2-x^*|$ edges, and $|x_2-x^*|$ can be estimated by a simple triangle inequality using our previous remarks:
\begin{equation}
\begin{split}
|x_2-x^*| & \leq |x_2-x_1|+|x_1-p_1\xi_1|+|p_1\xi_1-p_2\xi_1|+|p_2\xi_1-x^*| \\ & \leq |x_2-x_1|+ 2dp_1+(p_2-p_1)+p_2.
\end{split}
\end{equation}
\noindent Thus by counting the edges in each part of $\Gamma_0$ we get an estimate for its $\ell_1$-length:
\begin{equation}
\begin{split}
|\Gamma_0| & \leq \left(2d+1\right)p_1+\left(p_2-p_1\right)+p_2+\left(|x_2-x_1|+p_2+(p_2-p_1)+2dp_1\right) \\ & \leq 4p_2+4dp_1+|x_2-x_1|.
\end{split}
\end{equation}
\noindent Given that $\Gamma_0$ only uses cheap edges, it also yields a bound for its passage time:
\begin{displaymath}
\tau(\Gamma_0)\leq (4p_2+4dp_1+|x_2-x_1|)\inf A +\varepsilon.
\end{displaymath}
\noindent Comparing the bounds on $\tau(\Gamma)$ and $\tau(\Gamma_0)$ we see that the desired inequality $\tau(\Gamma_0)<\tau(\Gamma)$ necessarily holds if 
\begin{displaymath}
(4p_2+4dp_1+|x_2-x_1|)\inf A +\varepsilon< 5|x_2-x_1|(\inf A + \varepsilon)-\varepsilon.
\end{displaymath}
\noindent However, as $|x_2-x_1|\geq{p_2-p_1}$, it is easy to see that this holds if $p_2$ is large enough. Thus we ruled out the possibility of the existence of a geodesic from $\partial K_1$ to $\partial K_2$ which does not even share edges with $[p_1\xi_1,p_2\xi_1]$.

The cases where $\Gamma$ contains some, but not all of the edges of $[p_1\xi_1,p_2\xi_1]$ can be handled similarly. In order to do this, denote the last point of $\Gamma$ in $\partial K_1$ by $y_1$. By the previous case, if $\Gamma$ is a geodesic from $\partial K_1$ to $\partial K_2$, it must contain a point of the line segment $[p_1\xi_1,p_2\xi_1]$ after passing through $y_1$. Denote the first such point by $z_1$. Assume that these points are distinct. Between these points $\Gamma$ only uses expensive edges. However, by geometry, between $y_1$ and $z_1$ there exists a path not longer in $\ell_1$ using only cheap edges, which is necessarily cheaper than the original path which only used expensive edges. This argument shows that for such a geodesic $\Gamma$, we must have $y_1=z_1=p_1\xi_1$.

Furthermore, let us denote by $z_2$ the last point of $\Gamma$ on $[p_1\xi_1,p_2\xi_1]$ after leaving $\partial K_1$, and by $y_2$ the first point of $\Gamma$ on $\partial K_2$. We claim that if $\Gamma$ is a geodesic, then $y_2=z_2$. Assume $y_2 \neq {z_2}$, that is, $\Gamma$ uses expensive edges after hitting $z_2$. The case when $y_2$ lies on the same facet of $K_2$ as $p_2\xi_1$ can be ruled out by the same geometric argument we used just before: in this case there exists an optimal $\ell_1$ path using only cheap edges, which is necessarily cheaper than any path using expensive edges. Finally, if $y_2$ lies on another facet of $K_2$, then $\Gamma$ clearly needs at least $|y_2-z_2|$ expensive edges to reach it from $z_2$, where $|y_2-z_2|\geq p_2-p_1$. However, the argument we used to rule out the case when $\Gamma$ does not share any edge with $[p_1\xi_1,p_2\xi_1]$ was essentially based on this inequality. Indeed, we can construct basically the same $\Gamma_0$ from $z_2$ to $y_2$, which is even more simple as the first two parts can be replaced by the segment $[z_2,p_2\xi_2]$, and then we might use the same estimates. It proves $y_2=z_2$.

Thus if $p_2$ is chosen to be large enough for any configuration in $V$ we have that any geodesic ray starting from the origin contains the line segment $[p_1\xi_1,p_2\xi_1]$, which verifies that $F_m(0)$ is nowhere dense. Consequently, we obtain that in a residual set of $\Omega$ there is no more than one geodesic ray starting from a given lattice point which concludes the proof of our weaker statement.

Now let $F\subseteq{\Omega}$ be the set of configurations in which there are at least two geodesic rays. Denote by $F(x)$ the set of configurations in which there are at least two distinct geodesic rays starting from $x$, and by $F_m$ the set of configurations in which there exist two disjoint geodesic rays starting from the cube $[-m,m]^d$. Then 
\begin{displaymath}
F=\left(\bigcup_{x\in\mathbb{Z}^d}F(x)\right)\cup\left(\bigcup_{m=1}^{\infty}F_m\right)
\end{displaymath}
\noindent clearly holds: if there exist at least two geodesic rays they are either disjoint or meet at some point $x$, and in the latter case we have two geodesic rays starting from $x$ if we forget about the initial parts of these geodesics. Furthermore, we know that each of sets $F(x)$ are meager. Thus if we could obtain that each $F_m$ is nowhere dense, that would conclude the proof. However, having seen the proof of the first part we do not have a difficult task as we can basically repeat that argument. Indeed, in that proof we showed that for a given cylinder set $U$ one can construct cubes $K_1,K_2$ and another cylinder set $V\subseteq{U}$ such that for configurations in $V$ any geodesic from $\partial K_1$ to $\partial K_2$ goes along the line segment $[p_1\xi_1,p_2\xi_1]$. Thus if we choose $p_1>m$ during the construction we will obtain that none of the configurations in $F_m$ can appear in $V$ as in $V$ there cannot be two disjoint geodesic rays starting from $[-m,m]^d$, given they all meet in $p_1\xi_1$. Thus $F_m$ is nowhere dense indeed, which concludes the proof of the theorem. \end{proof}

Theorem 1.2 has the following obvious corollary about geodesic lines, as a geodesic line can be considered as the union of two distinct geodesic rays:

\begin{kov} If $\sup A>5 \inf A$ then in a residual set of $\Omega$ there exists no geodesic line in $\mathbb{Z}^d$. \end{kov}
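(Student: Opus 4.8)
The plan is to read this off Theorem 1.2 with essentially no extra work. Let $R\subseteq\Omega$ be the residual set furnished by Theorem 1.2 on which there is no more than one geodesic ray in $\mathbb{Z}^d$; since $R$ is residual it suffices to check that no configuration in $R$ admits a geodesic line.

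So suppose, towards a contradiction, that $\omega\in R$ has a geodesic line $\Gamma=(v_i)_{i\in\mathbb{Z}}$, i.e.\ a self-avoiding path indexed by $\mathbb{Z}$ each of whose finite subpaths is a finite geodesic in $\omega$. Fix the vertex $v_0$ and cut $\Gamma$ there into the forward part $\Gamma^+=(v_0,v_1,v_2,\dots)$ and the backward part $\Gamma^-=(v_0,v_{-1},v_{-2},\dots)$. Both are indexed by $\mathbb{N}$ and both are self-avoiding, being sub-objects of the self-avoiding $\Gamma$; and every finite subpath of $\Gamma^{\pm}$ is a finite subpath of $\Gamma$, hence a finite geodesic, so $\Gamma^+$ and $\Gamma^-$ are geodesic rays. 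Moreover $\Gamma^+$ and $\Gamma^-$ meet only in the single vertex $v_0$ and share no edge whatsoever (again by self-avoidance of $\Gamma$), so in particular they have only finitely many edges in common and are therefore distinct geodesic rays in the sense of Theorem 1.2. This contradicts $\omega\in R$, where there is at most one geodesic ray in $\mathbb{Z}^d$.

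Hence $R$ is a residual subset of $\Omega$ in which no geodesic line exists, which is exactly the claim. I do not expect a real obstacle here; the only points deserving a word are that "distinct" in Theorem 1.2 means sharing only finitely many edges, which is weaker than being edge-disjoint, so the two rays coming from a geodesic line certainly qualify, and that the defining property of an infinite geodesic is inherited because finite subpaths of $\Gamma^{\pm}$ are literally finite subpaths of $\Gamma$. (Alternatively, one could work with the per-vertex version proved first in Theorem 1.2 and take a countable union over the starting vertex $v_0\in\mathbb{Z}^d$, but invoking the global form of Theorem 1.2 is cleaner.)
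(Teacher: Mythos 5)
Your proposal is correct and follows exactly the paper's (one-sentence) argument: the paper also simply observes that a geodesic line splits at any of its vertices into two geodesic rays sharing only finitely many (in fact zero) edges, hence Theorem 1.2 rules it out on the same residual set. You have merely written out the elementary verifications the paper leaves implicit.
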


\begin{proof}[Proof of Theorem 1.3] In this case, we will also consider first geodesic rays starting from the origin: we will prove that in a residual set of $\Omega$ there cannot be more than $2d$ distinct geodesic rays starting from the origin. Let $F(0)$ denote now the set of configurations in which there are at least $2d+1$ distinct geodesic rays starting from the origin. We decompose $F(0)$ as $\bigcup_{m=1}^{\infty}F_m(0)$ where $F_m(0)$ stands for the set of configurations in which there are at least $2d+1$ distinct geodesic rays starting from the origin and any two have at most $m$ edges in common. Proving that $F_m$ is nowhere dense for each $m$ would verify our first claim. To check this, we will use a similar machinery as in the proof of Theorem 1.2. Let $U$ be fixed cylinder set and define the hypercubes $K_1$ and $K_2$ as back there. Our goal is to have a control over geodesics from $\partial K_1$ to $\partial K_2$, but instead of having essentially one cheap path between the two boundaries, now we can have $2d$. Informally, the idea is quite straightforward: we will have cheap edges on the boundaries and along the line segments $[p_1\xi_i,p_2\xi_i]$ and $[-p_1\xi_i,-p_2\xi_i]$ for $i=1,2,...,d$, (where $\xi_i$ denotes the $i$-th coordinate vector), while we will have expensive edges on the remaining edges between the two boundaries.
Formally, we fix some $\varepsilon>0$ which is at most the half of the diameter of $A$, and for every cheap edge $e$ we define 
\begin{displaymath}
V_e=[\inf A, \inf A + \varepsilon_e ) \cap A,
\end{displaymath}
\noindent where the $\varepsilon_e$s sum for cheap edges is at most $\varepsilon$. Furthermore we fix some $\lambda>1$ such that there exists $a\in A$ satisfying $a>\lambda(\inf A + \varepsilon)$. For any expensive edge we define
\begin{displaymath}
V_e=(a-\varepsilon_e, a+ \varepsilon_e ) \cap A,
\end{displaymath}
\noindent where the $\varepsilon_e$s sum for expensive edges is at most $\varepsilon$. Furthermore, for the sake of brevity we introduce the notation $I_i=[p_1\xi_i,p_2\xi_i]$ and $-I_i=[-p_1\xi_i,-p_2\xi_i]$.

We claim that in $V$ for any $x_1\in\partial K_1$ and $x_2 \in\partial K_2$, and any path $\Gamma$ from $x_1$ to $x_2$ which contains none of the segments $I_i$, or $-I_i$, there exists a cheaper path which contains one of them. Proceeding towards a contradiction, assume the existence of $x_1,x_2,\Gamma$ such that there is no such a cheaper path in a certain configuration. Consider such $\Gamma$ with minimal $\ell_1$-length. Then we obviously have that $x_1$ is the only point of $\Gamma$ on $\partial K_1$. Indeed, assume for example that $x_1'$ is another point of $\Gamma\cap\partial K_1$. Denote the subpath of $\Gamma$ from $x_1$ to $x_1'$ by $\Gamma_1$, and the subpath from $x_1'$ to $x_2$ by $\Gamma_2$. Now if there would exist a cheaper path $\Gamma_2'$ from $x_1'$ to $x_2$ containing one of the segments $I_i$ or $-I_i$, we would immediately have that $\Gamma_1\cup\Gamma_2'$ is cheaper than $\Gamma$, and contains one of these segments, a contradiction. Thus the path $\Gamma_2$ connecting $x_1'$ and $x_2$ is also a path with the property that there is no cheaper path containing any of the segments $I_i$ or $-I_i$, and its $\ell_1$-length is smaller than the $\ell_1$-length of $\Gamma$. It cannot happen by the definition of $\Gamma$, thus we indeed have that $x_1$ is the only point of $\Gamma$ on $\partial K_1$. Similarly one can show that $x_2$ is the only point of $\Gamma$ on $\partial K_2$. 

First, let us consider the case when $\Gamma$ does not use any cheap edge. In this case, as $\Gamma$ cannot enter $K_1$, it uses at least $|x_2-x_1|$ expensive edges, yielding 
\begin{displaymath}
\tau(\Gamma)\geq \lambda|x_2-x_1|(\inf A + \varepsilon)-\varepsilon.
\end{displaymath}
\noindent Without limiting generality, for now we may assume that $x_2$ lies on the same facet of $K_2$ as $p_2\xi_1$. Compare $\Gamma$ to the following path $\Gamma_0$ from $x_1$ to $x_2$: first, we go from $x_1$ to $p_1\xi_1$ on $\partial K_1$ using the shortest possible way $\ell_1$, then we proceed to $p_2\xi_1$ along the line segment $[p_1\xi_1,p_2\xi_1]$, finally we get to $x_2$ on $\partial K_2$ once again using the shortest possible way in $\ell_1$. 
\begin{figure}[h!]
  \includegraphics[width=250pt]{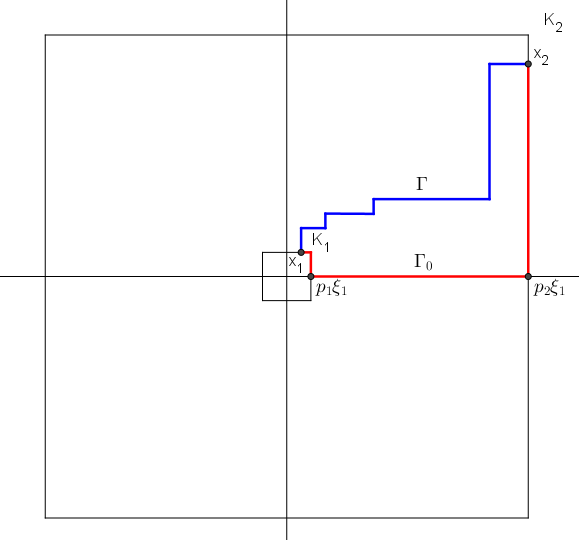}
  \caption{The case when $\Gamma$ and the line segments $I_i, -I_i$ have no common edges for $d=2$}
  \label{abra2}
\end{figure}

Then this path contains only cheap edges. Furthermore, we can get from $x_1$ to $p_1\xi_1$ using at most $p_1+2dp_1$ edges (as in the proof of Theorem 1.2), and afterwards we use at most $4p_1+|x_2-x_1|$ edges by a simple triangle inequality. Thus we have
\begin{displaymath}
\tau(\Gamma_0)\leq (|x_2-x_1|+(5+2d)p_1)\inf A +\varepsilon.
\end{displaymath}
\noindent Comparing the bounds on $\tau(\Gamma)$ and $\tau(\Gamma_0)$ we see that the desired inequality $\tau(\Gamma_0)<\tau(\Gamma)$ necessarily holds if
\begin{displaymath}
(5+2d)p_1 \inf A +2\varepsilon< \varepsilon|x_2-x_1|,
\end{displaymath}
\noindent which trivially holds if $p_2$ is large enough, since $|x_2-x_1|\geq{p_2-p_1}$. Thus we ruled out the case when $\Gamma$ does not even share edges with any of the line segments $I_i$ or $-I_i$.

Let us assume now that $\Gamma$ contains some edges of one of the line segments $I_i$ or $-I_i$. Denote the first point of $\Gamma$ on one of these line segments by $z_1$. By the same geometric argument as in the previous proof, between $x_1$ and $z_1$ there exists an optimal $\ell_1$ path using only cheap edges, which is necessarily cheaper than any path using expensive edges. Thus we have $x_1=z_1=p_1\xi_i$ or $x_1=z_1=-p_1\xi_i$ for some $i=1,2,...,d$, and the first edge of $\Gamma$ necessarily lies on $I_i$ or $-I_i$. By symmetry and without limiting generality, we can assume it lies on $I_1$. In this case, by the same virtue we can deduce that $\Gamma$ does not contain any points of any line segment $I_i$ or $-I_i$ distinct from $I_1$. Indeed, if that would be the case for some $p\in I_i$ or $p\in -I_i$, then $\Gamma$ would use expensive edges as it has only one point on $\partial K_1$ and $\partial K_2$, and without the edges of these boundaries the $I_i,-I_i$s are pairwise disconnected if we consider cheap edges only. However, to such a point $p$ we have a path $\gamma$ from $p_1\xi_1$ using only cheap edges, which is not longer in $\ell_1$ than any path using expensive edges. Thus $\gamma$ is cheaper than the subpath of $\Gamma$ connecting $p_1\xi_1$ and $p$, a contradiction.

Furthermore, let us denote by $z_2$ the last point of $\Gamma$ on $I_1$ after leaving $z_1$. We claim that if $\Gamma$ is a geodesic, then $x_2=z_2=p_2\xi_1$. Assume $x_2 \neq {z_2}$, that is $\Gamma$ uses expensive edges after hitting $z_2$. The case when $x_2$ lies on the same facet of $K_2$ as $p_2\xi_1$ can be ruled out by the same geometric argument we used just before. 

Assume $x_2$ lies on another facet of $K_2$, first assume that it is a neighboring one. By symmetry, we can assume $x_2$ is on the same facet as $p_2\xi_2$. Let $z_2=q_2\xi_1$, then the geodesic $\Gamma$ from $x_1=p_1\xi_1$ to $x_2$ uses $q_2-p_1$ cheap edges and then at least $|x_2-z_2|$ expensive edges. Thus for the passage time of $\Gamma$ we have
\begin{displaymath}
\tau(\Gamma)\geq (q_2-p_1)\inf A+\lambda|x_2-z_2|(\inf A + \varepsilon)-\varepsilon.
\end{displaymath}
\noindent Compare it to the following path $\Gamma_0$ from $x_1$ to $x_2$ (see Figure 3): we go from $x_1$ to $p_1\xi_2$ using the shortest possible way on $\partial K_1$, then we go along $I_2$, after this we proceed to $z_2+p_2\xi_2$ using the shortest possible way on $\partial K_2$, finally we go to $x_2$ on $\partial K_2$ again.
\begin{figure}[h!]
  \includegraphics[width=250pt]{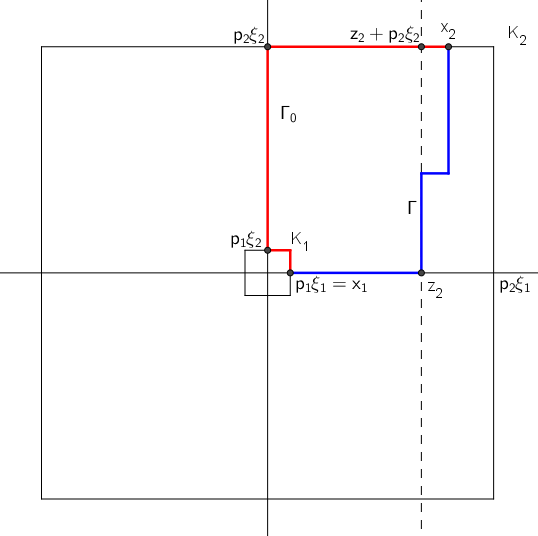}
  \caption{The case when $x_2$ lies on a neighboring facet}
  \label{abra3}
\end{figure}
Then $\Gamma_0$ uses only cheap edges, precisely $(q_2+p_1)+|x_2-z_2|$ of them as one can easily check. Thus for the passage time of $\Gamma_0$, we have the following estimate:
\begin{displaymath}
\tau(\Gamma_0)\leq ((q_2+p_1)+|x_2-z_2|)\inf A+\varepsilon.
\end{displaymath}
\noindent Using the fact that $|x_2-z_2|$ is at least $p_2$, the comparison of these bounds is similar to the already seen ones and one quickly obtains $\tau(\Gamma_0)<\tau(\Gamma)$ for large $p_2$, contradicting the fact that $\Gamma$ is a geodesic.

Finally, if $x_2$ lies on the opposite facet of $K_2$, that is the same facet where $-p_2\xi_1$ can be found, then we can use almost the same estimates after choosing $\Gamma_0$ to be the path from $x_1$ to $x_2$ of cheap edges containing $-I_1$, and arrive at a contradiction the same way. This contradiction concludes the proof of the weaker statement: in a residual set of $\Omega$ there cannot be more than $2d$ distinct geodesic rays starting from the origin.

To obtain the original statement of the theorem, we just borrow the idea of the proof of Theorem 1.2. Let $F\subseteq{\Omega}$ be the set of configurations in which there are at least $4d^2+1$ distinct geodesic rays. Denote by $F(x)$ the set of configurations in which there are at least $2d+1$ distinct geodesic rays starting from $x$, and by $F_m$ the set of configurations in which there exist $4d^2+1$ geodesic rays starting from the cube $[-m,m]^d$ such that any $2d+1$ of them has an empty intersection. Then 
\begin{displaymath}
F\subseteq\left(\bigcup_{x\in\mathbb{Z}^d}F(x)\right)\cup\left(\bigcup_{m=1}^{\infty}F_m\right)
\end{displaymath}
\noindent clearly holds. Furthermore, we know that each of the sets $F(x)$ are meager. Thus if we could obtain that any $F_m$ is nowhere dense, that would conclude the proof. However, having seen the proof of the first part we do not have a difficult task as we can basically copy that argument. Indeed, in that proof we showed that for a given cylinder set $U$ one can construct the cubes $K_1,K_2$ and another cylinder set $V\subseteq{U}$ such that for configurations in $V$ any geodesic from $\partial K_1$ to $\partial K_2$ goes along one of the line segments $I_i$ or $-I_i$. Thus if we choose $p_1>m$ during the construction we will obtain that none of the configurations in $F_m$ can appear in $V$. Indeed, by the pigeonhole principle in $V$ there cannot be $4d^2+1$ geodesic rays starting from $[-m,m]^d$ such that any $2d+1$ of them has an empty intersection, given that all of them uses one of the $2d$ line segments. Thus $F_m$ is nowhere dense indeed, which concludes the proof of the theorem. \end{proof}

By the same machinery one can obtain the following intermediate result by placing cheap edges on the boundaries and for example on $I_1$ and $-I_1$: if $\sup A  > 3 \inf A$, then in a residual set of $\Omega$ there is no more than 2 geodesic rays starting from the origin, and there is no more than 4 geodesic rays altogether. However, after seeing the previous proofs it is not of much interest. What would be more exciting, that is to give a set $A$ such that there are at least two geodesics in a residual set of $\Omega$. Obviously, Theorem 1.2 excludes a lot of possibilites, however, one might hope that if $A$ is sufficiently narrow then there should be two geodesic rays in a large set of $\Omega$, for example one heading somewhat to the direction of $\xi_1$ and another heading to the direction of $-\xi_1$. At first glance, we may think that it is a simple task: we just have to copy the idea of K\H{o}nig's lemma twice, first for the geodesics to the points $\xi_1, 2\xi_1, ...$, and then to the points $-\xi_1, -2\xi_1, ...$, and thus we obtain two geodesic rays. However, we cannot guarantee that they are distinct, even though our instinct might say that they should be if there are not large deviations between the values in $A$. The reason behind this difficulty is that in order to control this property, seemingly we would need some knowledge about the passage times of infinitely many edges, which is something we cannot obtain in a large set. Thus this remains an open question.

\section{The behavior of $\frac{T(0, \mu x)}{\mu |x|}$}

Before proving Theorem 1.4, it is worth mentioning that a sequence of the form $\frac{T(0,\mu_k x)}{\mu_k |x|}$ cannot have a limit smaller than $\inf A$ or larger than $\sup A$, regardless of which configuration we observe. Indeed, choose $\mu$ large and let us denote by $p(\mu,x)$ the lattice point with the property $\mu x \in p(\mu,x)+[0,1)^d$, that is the lattice point which was used to define the passage time $T(0,\mu x)$. Then we have $\left|p(\mu,x)-\mu x\right|<d$. Thus we have
\begin{displaymath}
(\mu |x| -d)\inf A \leq T (0, \mu x) \leq (\mu |x| +d) \sup A,
\end{displaymath}
where we obtain the first inequality by considering any path from $0$ to $p(\mu,x)$ and the second one by considering a path between these points with minimal $\ell_1$-length. A simple rearrangement verifies our claim.

\begin{proof}[Proof of Theorem 1.4]

By a simple rescaling it is easy to see that it suffices to prove the statement for $x\in\mathbb{R}^d$ with $|x|=1$. Indeed, if for a given $\lambda$ the sequence of coefficients $\mu_k$ yields the given limit for the point $\frac{x}{|x|}$ then the sequence of coefficients $\mu_k|x|$ will be fine for the point $x$. In the spirit of this remark let us fix $x\in\mathbb{R}^d$ with $|x|=1$. We say that $\frac{T(0,\mu x)}{\mu}$ is a normalized passage time in the direction of $x$.

Let us denote by $S$ the set of configurations that are "bad" for us, namely the subset of $\Omega$ in which there exists some (finite or infinite) $\lambda$ with ${\inf A} \leq \lambda \leq {\sup A}$ such that there is no sequence $(\mu_k)_{k=1}^{\infty}$ with $\mu_k\to\infty$ such that $\lim_{k\to\infty}\frac{T(0,\mu_kx)}{\mu_k}=\lambda$. In this case, there is surely such a finite $\lambda$, thus in our further arguments we think about $S$ this way. Our aim is to express $S$ as a countable union of sets which are easier to handle and prove that these sets are nowhere dense. Having this purpose in mind, we will denote by $S(\lambda,\delta,M)$ the set of configurations in which for any $\mu>M$ we have that the distance of $\frac{T(0,\mu_kx)}{\mu_k}$ and $\lambda$ is larger than $\delta$. The following equation clearly holds:
\begin{displaymath}
S=\bigcup_{\inf A < \lambda < \sup A}\bigcup_{\delta>0}\bigcup_{M>0}S\left(\lambda,\delta,M \right).
\end{displaymath}
\noindent Indeed, by the definition of convergence if there is no appropriate sequence of coefficients for a given $\lambda\in(\inf A, \sup A)$ then there exists a neighborhood of it such that $\frac{T(0,\mu x)}{\mu}$ is not in this neighborhood for large enough $\mu$. However, by basic separability arguments on the real line we have that it further equals
\begin{displaymath}
S=\bigcup_{\lambda\in\mathbb{Q}, \inf A < \lambda < \sup A}\bigcup_{n\in\mathbb{N}}\bigcup_{m\in\mathbb{N}}S\left(\lambda,\frac{1}{n},m \right),
\end{displaymath}
\noindent which is a decomposition we pursued.

Having this knowledge it suffices to prove that all the sets $S\left(\lambda,\frac{1}{n},m \right)$ are nowhere dense. In order to prove this, fix $\lambda,n,m$, and fix real numbers $a,b$ with
\begin{displaymath}
\inf A \leq a < \lambda < b \leq \sup A.
\end{displaymath}
(This step has importance only if $A$ is unbounded, and its sole technical role is that we cannot calculate with $\sup A$ in this case, thus it needs to be replaced by a finite quantity.)

Clearly it suffices to prove our claim for large enough $n$, as for fixed $\lambda$ and $m$ the sequence $S\left(\lambda,\frac{1}{n},m \right)$ is growing as $n$ tends to infinity. Thus without loss of generality it suffices to consider the case when $a+\frac{1}{n}<\lambda<b-\frac{1}{n}$.

As usual, fix $U$ to be a cylinder set, and denote the set of edges belonging to nontrivial projections of $U$ by $E_U=\{e_1, e_2, ..., e_k\}$. As in the proof of Theorem 2.1, we can construct a smaller cylinder set by shrinking the projections $U_{e_1}, ..., U_{e_k}$, such that all of these projections are bounded in $\mathbb{R}$. Again, we denote these new projections by $U'_{e_i}$, $i=1,...,k$, and the cylinder set defined by them by $U'$. Then for any configuration in $U'$, the sum of passage times over the edges $e_1,...,e_k$ is bounded by a constant $C$. Our goal is to find a cylinder set $V\subseteq{U'}$ and some $\mu>m$ such that the distance of $\frac{T(0,\mu x)}{\mu}$ and $\lambda$ is at most $\frac{1}{n}$ for any configuration in $V$. We state that for suitably large $\mu$ it is possible to find such $V$. Consider a large $\mu>1$, its exact value is to be determined later. 

Now fix a path $\Gamma_0$ with minimal $\ell_1$-length from the origin to $p(\mu,x)$. Roughly we would like to define $V$ such that it has nontrivial projections to the edges in $E_U$ and to the edges in a large box $K$ containing $0$ and $p(\mu,x)$. (The size of $K$ is also to be fixed later.) Concerning the edges in $\Gamma_0\setminus E_U$, we would like to define the projections so that the passage time of $\Gamma_0$ is close to $\lambda\mu$, by having projections close to $a$ or $b$ with a suitable frequency. For the other edges in $K$ we would like to have projections close to $b$ in order to guarantee that the passage time between 0 and $p(\mu,x)$ is not reduced too much by another path.

Rigorously speaking, choose $\mu$ sufficiently large so that $|p(\mu,x)|=N_1+N_2$ for some positive integers satisfying
\begin{displaymath}
\frac{aN_1 + bN_2}{|p(\mu,x)|}\in\left(\lambda-\frac{1}{4n},\lambda+\frac{1}{4n}\right).
\end{displaymath}
\noindent As $|p(\mu,x)|$ can be arbitrarily large and the length of the interval we aim at is fixed, it is simple to see that we can choose $\mu,N_1,N_2$ to satisfy this relation. Moreover, as the distance of $|p(\mu,x)|$ and $\mu$ is bounded by $d$, for suitably large $\mu$ this yields
\begin{equation}
\frac{aN_1 + bN_2}{\mu}\in\left(\lambda-\frac{1}{2n},\lambda+\frac{1}{2n}\right).
\end{equation}
\noindent Now we choose $N_1$ edges of $\Gamma_0$, and for the ones not in $E_U$, we require $V$ to have projection $(a-\varepsilon_e,a +\varepsilon_e)\cap A$ to any such edge $e$, such that the sum of these $\varepsilon_e$s is at most $\frac{1}{4n}$. These are the cheap edges. We proceed similarly for all the other edges in $K$: for the ones not in $E_U$, we require $V$ to have projection $(b-\varepsilon_e,b +\varepsilon_e)\cap A$ to any such edge $e$, such that the sum of these $\varepsilon_e$s is at most $\frac{1}{4n}$. These are the expensive edges, and by the choice of $n$, they are bounded away from the cheap ones. As the number of edges in $E_U$ is fixed and $N_1,N_2$ can grow arbitrarily large for large $|p(\mu,x)|$, the projection to the majority of the edges in $\Gamma_0$ will be either cheap or expensive. We fix $K$ now: define it such that any path leaving $K$ contains at least $|p(\mu,x)|$ expensive edges.

Now our only remaining task is to estimate the passage time between 0 and $p(\mu,x)$ for configurations in $V$. Our aim is to verify that we have
\begin{equation}
T(0,p(\mu,x))\in\left[\mu\left(\lambda-\frac{1}{n}\right),\mu\left(\lambda+\frac{1}{n}\right)\right],
\end{equation}
\noindent which would follow from
\begin{equation}
\tau(\Gamma)>\mu\left(\lambda-\frac{1}{n}\right)
\end{equation}
\noindent for any path $\Gamma$ from $0$ to $p(\mu,x)$ and
\begin{equation}
\tau(\Gamma_0)<\mu\left(\lambda+\frac{1}{n}\right).
\end{equation}
\noindent In order to check (5), consider now any path $\Gamma$ from 0 to $p(\mu,x)$. If $\Gamma$ leaves $K$, it contains at least $N_1+N_2$ expensive edges, which results in
\begin{displaymath}
\frac{\tau(\Gamma)}{\mu}\geq\frac{b(N_1+N_2)-\frac{1}{4n}}{\mu}>\frac{aN_1 + bN_2-\frac{1}{4n}}{\mu}>\lambda-\frac{1}{n},
\end{displaymath} 
\noindent by (3), $\mu>1$ and the condition on the expensive edges. Thus we have (5) for these paths. Assume now that $\Gamma$ stays in $K$. Then $|\Gamma|\geq |p(\mu,x)|$, and at most $k$ edges of $\Gamma$ is in $E_U$. Thus $\Gamma$ has at least $N_1+N_2-k$ edges which are either cheap or expensive. As amongst these at most $N_1$ are cheap, we have the following lower bound on the passage time of $\Gamma$ if we forget about the edges in $E_U\cap\Gamma$ and consider the trivial lower estimates for the number and passage times of cheap and expensive edges:
\begin{displaymath}
\frac{\tau(\Gamma)}{\mu}\geq\frac{aN_1+b(N_2-k)-\frac{1}{4n}}{\mu}>\lambda-\frac{1}{n},
\end{displaymath}
\noindent by (3) for large enough $\mu$ as $\frac{bk}{\mu}$ tends to 0. It verifies (5) for any path from 0 to $p(\mu,x)$, hence it remains to show (6). However, it can be done similarly. We know that $\Gamma_0$ contains at most $N_1$ cheap edges, $N_2$ expensive edges, and the sum of passage times on the edges in $E_U\cap\Gamma$ is bounded by $C$ for any configuration in $V$. Thus we have
\begin{displaymath}
\frac{\tau(\Gamma)}{\mu}\leq\frac{aN_1+bN_2+\frac{1}{4n}+C}{\mu}<\lambda+\frac{1}{n},
\end{displaymath}
by (3) for large $\mu$, which verifies (6), and concludes the proof. \end{proof}

\section{The behavior of $\frac{B(t)}{t}$}

Before proving Theorem 1.5, we would like to explain its conditions. Requiring connected and closed limit sets is completely reasonable, as $B(t)$ is always connected, however, to require them to be bounded is not natural. The reason behind this is that the case of the unbounded closed sets seems to be much more difficult to handle: similar difficulties arise as in proving the existence of distinct geodesic rays. More precisely, if we want to copy our argument given for compact sets, at a point we cannot proceed as we would need some knowledge about infinitely many passage times which we lack on the complement of a nowhere dense set.

The necessity of the conditions about containing $D_\frac{1}{\sup A}$ and being contained by $D_\frac{1}{\inf A}$ can be verified similarly as the necessity of the conditions of Theorem 1.4. For example even if every passage time would be $\inf{A}$, which yields that $B(t)$ is as large as can be for each $t$, the limit of $\frac{B(t)}{t}$ would be $D_\frac{1}{\inf A}$, and not larger.

Now let us observe the definition of $\mathcal{P}_A^d$. If $\inf A=0$, we have $\overline{\mathcal{P}_A^d}=\mathcal{K}_A^d$, thus it does not require further explanation. However, we state that for any $A$ and $K\notin\mathcal{K}_A^d\setminus\overline{\mathcal{P}_A^d}$ there is no configuration for which there exists a sequence $(t_n)_{n=1}^\infty$ with the given properties. Assume the converse. Denote by $\tilde{B}(t)$ the subgraph of $\mathbb{Z}^d$ which is accessible from the origin in time $t$. Then as the Hausdorff distance of $B(t)$ and $\tilde{B}(t)$ is uniformly bounded by a constant dependent only on the dimension, we have that $\frac{\tilde{B}(t_n)}{t_n}$ also converges to $K$ in Hausdorff distance. However, we know that $\tilde{B}(t_n)$ is a connected subgraph of $\mathbb{Z}^d$, and each of its points is accessible from the origin using a path with $\ell_1$-length $\frac{t_n}{\inf A}$. Thus any point of $\frac{\tilde{B}(t_n)}{t_n}$ is accessible from the origin using a topological path, which stays in the set, and has $\ell_1$-length at most $\frac{1}{\inf A}$. It easily yields that 
\begin{displaymath}
\frac{\tilde{B}(t_n)}{t_n}\in\overline{\mathcal{P}_A^d},
\end{displaymath}
\noindent as we can shrink $\frac{\tilde{B}(t_n)}{t_n}$ a bit, we get a set in $\mathcal{P}_A^d$. As a consequence, $K\in\overline{\mathcal{P}_A^d}$ as the Hausdorff limit of the sets $\frac{\tilde{B}(t_n)}{t_n}$, a contradiction. This argument shows that we cannot have higher hopes than converging to sets in $\overline{\mathcal{P}_A^d}$. To conclude this remark, we point out that $\overline{\mathcal{P}_A^d}$ contains certain natural classes of sets, even if $\inf A\neq 0$. First of all, it is quite obvious that it contains all the convex sets of $\mathcal{K}_A^d$. Moreover, it contains the star domains of $\mathcal{K}_A^d$ with respect to the origin. We also mention a less natural class: we introduce the notion of star domains in $\ell_1$ sense. The set $S\subseteq\mathbb{R}^d$ is a star domain with respect to $x_0\in{S}$ in $\ell_1$ sense (or generalized star domain with respect to $x_0$), if for any $x\in{S}$ there is a topological path from $x_0$ to $x$ in $S$ with $\ell_1$-length $|x-x_0|$. In other words, each of the coordinate functions of the topological path are monotone. We denote the subset of $\mathcal{K}_A^d$ containing the generalized star domains with respect to $0$ by $\mathcal{K}_A^{d,*}$. Then $\mathcal{K}_A^{d,*}\subseteq \mathcal{P}_A^d$ also holds.

Finally, a few words should also be said about the condition $\inf A=0$ or $\sup A=\infty$. Sadly, we cannot say much about the case when $A$ is bounded away from both $0$ and $\infty$ if $d>1$. We would like to highlight though that the statement of Theorem 1.5 does not hold in this form by giving an example for $d=2$, which is easy to modify for higher dimensions. We formulate this claim as a proposition.

\begin{prop} For suitable $A$, there exists $K\in\overline{\mathcal{P}_A^{d}}$ such that there is no configuration in $\Omega$ and a sequence $(t_n)_{n=1}^{\infty}$ tending to infinity with $\frac{B(t_n)}{t_n} \to K$. \end{prop}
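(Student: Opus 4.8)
The plan is to show that whenever $\inf A>0$ and $\sup A<\infty$ — exactly the situation Theorem 1.5 leaves untouched — some genuinely one–dimensional ``tentacle'' in $\overline{\mathcal{P}_A^d}$ is unattainable. After rescaling we may assume $\inf A=1$, and we put $r=\tfrac{1}{\sup A}\in(0,1)$. I would work with
\[
K:=D_r\cup[0,\xi_1],
\]
the small $\ell_1$-ball together with the straight segment reaching the extreme point $\xi_1$ of $D_1$. This $K$ is connected and compact with $D_r\subseteq K\subseteq D_1$, and it lies in $\overline{\mathcal{P}_A^d}$: the truncations $K_\varepsilon:=D_r\cup[0,(1-\varepsilon)\xi_1]$ are generalized star domains with respect to $0$ (the straight paths one uses have monotone coordinate functions), hence belong to $\mathcal{K}_A^{d,*}\subseteq\mathcal{P}_A^d$, and $K_\varepsilon\to K$ in the Hausdorff metric as $\varepsilon\to 0$. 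So $K$ is a legitimate target set.

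Next, assume towards a contradiction that some configuration $\omega$ and some sequence $t_n\to\infty$ satisfy $\tfrac{B(t_n)}{t_n}\to K$. Since $\xi_1\in K$, Hausdorff convergence together with the uniformly bounded Hausdorff distance between $B(t)$ and $\tilde{B}(t)$ yields lattice points $v_n\in\tilde{B}(t_n)$ with $\tfrac{v_n}{t_n}\to\xi_1$, so in particular $\tfrac{|v_n|}{t_n}\to 1$. Fix a path $\gamma_n$ from $0$ to $v_n$ with $\tau(\gamma_n)\le t_n+1$ and let $L_n$ be its number of edges. Since every passage time is at least $\inf A=1$ and the number of edges is at least the $\ell_1$-length, $|v_n|\le L_n\le t_n+1$, hence $\tfrac{L_n}{t_n}\to 1$. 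Let $u_n$ be the vertex of $\gamma_n$ reached after $\lfloor L_n/2\rfloor$ edges. A short bookkeeping argument gives two facts: the suffix of $\gamma_n$ after $u_n$ has passage time at least $L_n/2+O(1)$, so the prefix costs at most $t_n/2+o(t_n)$, i.e.\ $T(0,u_n)\le t_n/2+o(t_n)$; and comparing $|u_n|\le L_n/2$ with the inequality (suffix length)$\,\ge(v_n)_1-(u_n)_1$ forces $\tfrac{u_n}{t_n}\to\tfrac12\xi_1$. Morally: to reach near the tip of the tentacle the path must be almost entirely cheap, so at its midpoint half of the time budget is still unspent.

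The final step is the fattening estimate, which substitutes for the Minkowski-type inclusion $\tfrac12 K\oplus\tfrac12 D_{1/\sup A}\subseteq K$ that one would obtain if convergence held along all of $(0,\infty)$ rather than a subsequence. For every lattice point $z$ with $|z-u_n|\le\tfrac{t_n-T(0,u_n)}{\sup A}$ we have $T(0,z)\le T(0,u_n)+|z-u_n|\sup A\le t_n$, so $z\in\tilde{B}(t_n)$; consequently $\tfrac{B(t_n)}{t_n}$ contains, up to an $O(1/t_n)$ error, the $\ell_1$-ball of radius $\tfrac{t_n-T(0,u_n)}{\sup A\,t_n}\to\tfrac{1}{2\sup A}$ centred at $\tfrac{u_n}{t_n}\to\tfrac12\xi_1$. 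Hence $\tfrac{B(t_n)}{t_n}$ contains points converging to $p:=\tfrac12\xi_1+\tfrac{1}{2\sup A}\xi_2$. But $p$ lies at $\ell_1$-distance $\tfrac{1}{2\sup A}$ from the segment $[0,\xi_1]$ (its foot $\tfrac12\xi_1$ being interior to that segment) and at distance $|p|-r=\tfrac12-\tfrac{1}{2\sup A}>0$ from $D_r$, while $K$ consists of nothing else; so $p$ is at a fixed positive distance from $K$, contradicting $\tfrac{B(t_n)}{t_n}\to K$.

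The main obstacle — and the reason for the whole construction — is precisely that we are handed only one subsequence $t_n$, so there is no direct monotonicity such as $B(2t_n)\supseteq B(t_n)\oplus D_{t_n/\sup A}$ to exploit; the midpoint device recovers enough of that monotonicity inside a single $B(t_n)$. Note also that the argument genuinely uses both standing hypotheses: $\inf A>0$ bounds $L_n$ by $\tau(\gamma_n)/\inf A$, and $\sup A<\infty$ powers the estimate $T(u_n,z)\le|z-u_n|\sup A$, so the proof is consistent with Theorem 1.5. Everything else — the membership $K\in\overline{\mathcal{P}_A^d}$, the length and passage-time accounting for $\gamma_n$, and the computation of the distance from $p$ to $K$ — is routine. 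For $d=2$ the picture is a small $\ell_1$-diamond with a straight whisker; for $d>2$ the construction and the argument are verbatim the same.
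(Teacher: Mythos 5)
Your proof is correct and takes essentially the same route as the paper's: choose a ``ball plus whisker'' target $K=D_r\cup[0,\xi_1]$, use near-optimal paths to the tip together with $\inf A>0$ to locate a midpoint vertex at which roughly half the time budget is still unspent, and then fatten around that vertex using $\sup A<\infty$ to exhibit a limit point at a fixed positive distance from $K$. The only differences are cosmetic — the paper fixes $A=\{1,2\}$, picks the midpoint as the first crossing of $\partial D_{t_n/2}$, and fattens diagonally toward $\xi_1+\xi_2$, whereas you allow any bounded $A$ with $\inf A>0$, pick the midpoint by edge count, and fatten along $\xi_2$ — so your version is a mild generalization of the same argument.
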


\begin{proof} Let $A=\{1,2\}$, and let $K=D_{\frac{1}{2}}\cup[0,\xi_1]$. Then $K\in\overline{\mathcal{P}_A^{d}}$ clearly holds, as $K$ is a star domain with respect to $0$. We state there is no configuration in $\Omega$ and a sequence $(t_n)_{n=1}^{\infty}$ tending to infinity with $\frac{B(t_n)}{t_n} \to K$. Assume the converse: there exists such a configuration and such a sequence of times. Then by the condition $\frac{B(t_n)}{t_n} \to K$, there exists a sequence of points $x_n=t_n\xi_1+o(t_n)v_n$, where $|v_n|=1$, and a path $\Gamma_n$ from 0 to $x_n$ with passage time $\tau\left(\Gamma_n\right)=t_n+o(t_n)$. (Here $o(t_n)$ denotes a sequence of quantities which satisfies $\frac{o(t_n)}{t_n}\to 0$ as $n\to\infty$.) This guarantees that $\Gamma_n$ contains at most $o(t_n)$ edges with passage time $2$. Moreover, for large enough $n$, these paths cross the boundary of $D_{\frac{1}{2}t_n}$ at some point $y_n$. Denote the piece of $\Gamma_n$ from $0$ to $y_n$ by $\Gamma_n'$. As $\Gamma_n'$ also contains at most $o(t_n)$ edges with passage time $2$, it is simple to check that it guarantees
\begin{displaymath}
T(0,y_n)\leq \tau\left(\Gamma_n'\right) = \frac{t_n}{2}+ o(t_n).
\end{displaymath}
\noindent Without loss of generality, we can assume that each $y_n$ lies in the upper half-plane. Based on the previous inequality, for arbitrary fixed $\alpha>0$ we have
\begin{equation}
\begin{split}
T\left(0,y_n+[\alpha t_n](\xi_1+\xi_2)\right) & \leq T(0,y_n)+T\left(y_n,y_n+[\alpha t_n](\xi_1+\xi_2)\right)\\ & \leq \left(\frac{1}{2}+4\alpha\right)t_n + o(t_n),
\end{split}
\end{equation}
\noindent if we estimate the second passage time of the middle expression by $4\alpha t_n$, which is a valid upper bound by the choice of $A$ and the $\ell_1$ distance of the two points observed. For large $n$ and small enough $\alpha$, it is strictly smaller than $t_n$, thus we have that $z_n=y_n+[\alpha t_n](\xi_1+\xi_2)$ is in $B(t_n)$. Moreover, for large enough $n$, the Euclidean distance of $z_n$ from both the first coordinate axis and $\partial D_{\frac{1}{2}t_n}$ is at least $\frac{\alpha t_n}{2}$. Thus the distance of $\frac{z_n}{t_n}$ from $K$ is at least $\frac{\alpha}{2}$ for large $n$. On the other hand, the sequence $\left(\frac{z_n}{t_n}\right)_{n=1}^{\infty}$ is in $D_1$, thus it has a convergent subsequence with limit $z\in{D_1}$ with distance at least $\frac{\alpha}{2}$ from $K$. However, $z$ is contained by the Hausdorff limit of $\frac{B_{t_n}}{t_n}$ by $z_n\in B_{t_n}$, which is $K$, a contradiction. \end{proof}

This proposition shows that in the cases not handled by Theorem 1.5, we need to modify the statement itself. Requiring convexity might be an attractive idea, as one might feel that in the example above the failure is somewhat caused by the lack of it, however, it is not complicated to construct configurations in which $\frac{B_t}{t}$ tends to a concave shape. Thus it might not be the proper way to overcome this difficulty. On the other hand, we have not even managed to prove the analogous statement for the convex sets of $\mathcal{K}_A^d$. Hence there is certainly a room for work on this question.

Now we turn our attention to the proof of Theorem 1.5. Denote by $\mathcal{P}_{A,0}^d$ the set that contains those sets of $\mathcal{P}_A^d$ which can be expressed as the closure of a connected open set. It is easy to see that $\mathcal{P}_{A,0}^d$ is dense in $\mathcal{P}_{A}^d$. Indeed, if $K\in\mathcal{P}_A^d$, denote by $K(r)$ the set of points which are in $D_\frac{1}{\inf A}$ and at most $r$ apart from $K$ in $\ell_1$. Then for sufficiently small $r$ the relation $K(r)\in\mathcal{P}_A^d$ holds: for any point $x\in{K(r)}$ we might choose $x'\in{K}$ within distance $r$. Then there is a topological path from the origin to $x'$ in $K$ of $\ell_1$-length at most $\frac{1}{\inf A} - \alpha_K$, which can be continued by a line segment of $\ell_1$-length $r$ to $x$. (Here we use the fact that $D_\frac{1}{\inf A}$ is convex.) Moreover, $K(r)$ is the closure of a connected open set: by the compactness of $K$, it is simply the closure of $K_0(r)$, the set of points which are in $D_\frac{1}{\inf A}$ and less than $r$ apart from $K$ in $\ell_1$.

As a consequence of the previous remark, if we could verify the modified statement of Theorem 1.5 which we obtain by replacing $\mathcal{P}_A^d$ by $\mathcal{P}_{A,0}^d$, that would be sufficient. We also recall that instead of the desired convergence of $\frac{B(t_n)}{t_n}$, it suffices to prove the same for $\frac{\tilde{B}(t_n)}{t_n}$.

Now by a standard argument about the separability of $\mathcal{P}_{A,0}^d$ it suffices to prove that for a given set $K\in\mathcal{P}_{A,0}^d$ we can find a suitable sequence of times in a residual set of $\Omega$. (We know that $\mathcal{P}_{A,0}^d$ is separable as it is a subspace of the separable $\mathcal{K}_A^d$, which is a metric space.) Our proof will rely on constructing cylinder sets in which we have a large control on $\tilde{B}(t)$. In other words, we desire to construct subgraphs of $\mathbb{Z}^d$ which are close to $tK$. To formalize this idea, we will need the following lemma:

\begin{lemma} Let $K\in\mathcal{P}_{A,0}^d$ and $\varepsilon>0$ fixed. Denote by $G_n$ the embedded graph whose vertices are the vertices of $\frac{\mathbb{Z}^d}{n}$ in $K$, and edges are those nearest neighbor edges which lie entirely in $K$. Then for infinitely many $n$ the graph $G_n$ has a connected subgraph $H_n$ such that it contains all the vertices and edges in $D_{\frac{1}{\sup A}}$, satisfies $d_H(K,H_n)<\varepsilon$, and to all of its vertices there is a path from the origin of $\ell_1$-length smaller than $\frac{1}{\inf A}$. \end{lemma}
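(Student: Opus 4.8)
The plan is to build $H_n$ by hand as a union of short lattice paths. Since $K\in\mathcal{P}_{A,0}^d$ we may write $K=\overline{W}$ with $W$ connected and open; then $\inte K$ is connected and dense in $K$ (if $\inte K=U_1\sqcup U_2$ were a nontrivial open splitting, the connected dense set $W\subseteq\inte K$ would lie in one piece, leaving the other a nonempty open set disjoint from the dense $W$, impossible), and $K=\overline{\inte K}$. Fix a finite set $x_1,\dots,x_M\in W$ forming an $\varepsilon/2$-net of $K$. For every large $n$ and each $j$ I will produce a lattice path $P_n^j$ in $G_n$ running from the origin to a vertex within $\ell_1$-distance $\varepsilon/2$ of $x_j$ and of $\ell_1$-length strictly below $\tfrac1{\inf A}$, and I set $H_n$ to be the union of the $P_n^j$ together with the subgraph of $\tfrac1n\mathbb{Z}^d$ induced on $D_{\frac1{\sup A}}$ (this last piece being just the origin when $\sup A=\infty$). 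All the $P_n^j$ and the ball-part contain the origin, so $H_n$ is connected; since $H_n\subseteq G_n\subseteq K$ and the net is $\varepsilon/2$-dense we get $d_H(K,H_n)<\varepsilon$; the containment of the vertices and edges in $D_{\frac1{\sup A}}$ is built in; and every vertex of $H_n$ is reached from the origin either by an initial segment of some $P_n^j$ or, inside $D_{\frac1{\sup A}}$, by a coordinatewise monotone lattice path of $\ell_1$-length at most $\tfrac1{\sup A}<\tfrac1{\inf A}$. So the lemma reduces to constructing the paths $P_n^j$.

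To build $P_n^j$, start from the topological path $\gamma_j$ from $0$ to $x_j$ in $K$ of $\ell_1$-length $L_j\le\tfrac1{\inf A}-\alpha_K$ supplied by $K\in\mathcal{P}_A^d$. I first perturb $\gamma_j$ so that, apart possibly from a short stretch near the origin, it runs inside $\inte K$ while its $\ell_1$-length increases by at most $\alpha_K/2$; because the perturbed path then has compact image inside the open set $\inte K$, it stays at $\ell_1$-distance at least some $\sigma_j>0$ from $\partial K$ there. Next I discretize at the intermediate scale $N:=\lfloor\sqrt n\rfloor$: I sample the perturbed path (call its length $L_j'\le L_j+\alpha_K/2$) at $N+1$ points equally spaced in $\ell_1$-arclength, round each sample to the nearest vertex $q_i$ of $\tfrac1n\mathbb{Z}^d$, and join $q_i$ to $q_{i+1}$ by a shortest (coordinatewise monotone) lattice path. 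Then $|q_i-q_{i+1}|_{\ell_1}\le\tfrac dn+\tfrac{L_j'}{N}$, so the total $\ell_1$-length of $P_n^j$ is at most $L_j'+N\cdot\tfrac dn=L_j'+o(1)$, which lies below $\tfrac1{\inf A}$ once $n$ is large. Moreover each monotone joining piece lies in an axis-parallel box of $\ell_1$-diameter at most $\tfrac dn+\tfrac{L_j'}{N}=o(1)$ centred within $o(1)$ of a sample point, and every sample point is at distance $\ge\sigma_j$ from $\partial K$; hence for $n$ large this box lies entirely in $K$, so every edge of $P_n^j$ belongs to $G_n$. The endpoint $q_N$ is within $\tfrac d{2n}<\varepsilon/2$ of $x_j$. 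Choosing $n$ large enough to serve all $M$ net points at once gives the conclusion for every sufficiently large $n$, in particular for infinitely many.

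The delicate point — and the one I expect to be the real obstacle — is the perturbation of $\gamma_j$ into $\inte K$ at controlled length cost. A naive lattice approximation of $\gamma_j$ can leave $K$ precisely where $\gamma_j$ touches $\partial K$ (there even the rounded sample points may fall outside $K$), so one genuinely has to push $\gamma_j$ off the boundary before discretizing. This is exactly where $K$ being the closure of a connected open set is used: $\inte K$ is then connected, dense, and regular, so the boundary excursions of $\gamma_j$ can be rerouted locally and cheaply, and the slack $\alpha_K$ built into the definition of $\mathcal{P}_A^d$ is what absorbs the rerouting cost together with the $o(1)$ discretization error. The only remaining care concerns the origin: if $\sup A<\infty$ then $0$ lies in the interior of $D_{\frac1{\sup A}}\subseteq K$ and $\gamma_j$ may be pushed into $\inte K$ all the way down to $0$; if $\sup A=\infty$ the origin may sit on $\partial K$, and one must instead argue directly that for the relevant $n$ some lattice edge at $0$ lies in $K$ so that each $P_n^j$ can leave the origin — this is plausibly the reason the statement only claims infinitely many $n$ rather than all large $n$.
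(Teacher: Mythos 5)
Your argument takes essentially the same route as the paper's: pick a finite net in $\inte K$, run the short topological paths furnished by $K\in\mathcal{P}_A^d$ out to the net points, push them into the interior, discretize them into lattice paths, and glue these onto the lattice vertices and edges inside $D_{1/\sup A}$. The differences are technical rather than structural, but two of them are worth naming. First, the paper chooses its net points to have rational coordinates and then restricts $n$ to common multiples of the denominators so that the net points are literal vertices of $G_n$; you use an arbitrary $\varepsilon/2$-net and only aim within $\varepsilon/2$ of each $x_j$, which is simpler and actually yields the conclusion for \emph{all} sufficiently large $n$, strictly more than the lemma asserts. In particular your closing guess --- that the ``infinitely many $n$'' is there to dodge the origin lying on $\partial K$ when $\sup A=\infty$ --- is not the paper's reason: it comes purely from the rational-denominator bookkeeping, which your version avoids. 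Second, the paper discretizes via an axis-parallel broken-line approximation $L_i$ of $\gamma_i$ and then follows small lattice hypercubes along $L_i$, whereas you sample directly at the intermediate scale $N=\lfloor\sqrt n\rfloor$ and join consecutive rounded samples by monotone lattice paths; both devices give the same $o(1)$ length overhead and the same ``box stays $\sigma$-deep in $K$'' containment. Finally, the step you flag as the real obstacle --- perturbing $\gamma_j$ off $\partial K$ into $\inte K$ at controlled $\ell_1$-cost --- is indeed the delicate point, and it is worth noting that the paper does not supply that argument either: it simply writes $\gamma_1,\dots,\gamma_m:[0,1]\to\inte K$ as if the definition of $\mathcal{P}_A^d$ produced paths into $\inte K$, when in fact it only gives paths into $K$, leaving the connectedness of $\inte K$ and the slack $\alpha_K$ to do the pushing-off implicitly. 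So your explicit acknowledgement of that gap is not a weakness relative to the paper; you have correctly located the one place where both arguments would need tightening to be fully rigorous.
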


If $\inf A=0$, we have $\mathcal{P}_{A,0}^d=\mathcal{K}_{A,0}^d$, and the last condition on $H_n$ is tautological. In this neater form, we find this lemma interesting in its own right as a nice exercise of a course in analysis. (It is likely to be known in some form, but we could not find a reference for it.)

\begin{proof}[Proof of Lemma 5.2]
As $\inte K$ is a connected open set, the points with rational coordinates in $\inte K$ form a dense subset of $K$. Let us consider now a open ball of radius $\varepsilon$ centered at each point with rational coordinates in $\inte K$. These balls give an open cover of the compact set $K$, thus we might choose a finite cover. Denote the centers of these balls by $v_1,...,v_m$. The coordinates of these points might have only finitely many distinct denominators. Thus if $n$ is chosen as a common multiple of them, $H_n$ can contain all these points, which guarantees $d_H(K,H_n)<\varepsilon$. What remains to show that is for large enough such $n$, it is possible to choose a connected $H_n$ satisfying the condition about the lengths of paths such that it contains the points $v_1,...,v_m$, and the vertices and edges in $D_{\frac{1}{\sup A}}$.

As $K\in\mathcal{P}_{A}^d$, there are topological paths $\gamma_1,...\gamma_m:[0,1]\to \inte K$ from $0$ to $v_1,...v_m$ with $\ell_1$-length less than $\frac{1}{\inf A}$. As each of the sets $\gamma_i\left([0,1]\right)$ are compact and contained by $\inte K$, it is possible to choose $r>0$ such that their neighborhoods of radius $r$ are also contained by $\inte K$. Moreover, by the definition of $\ell_1$-length we might choose points on $\gamma_i\left([0,1]\right)$ such that they can be connected by a broken line $L_i$ with pieces parallel to the coordinate vectors, and its length is also less than $\frac{1}{\inf A}$. Furthermore, by the existence of $r$, if we choose a suitably fine partition of $\gamma_i\left([0,1]\right)$, we might have $L_i\in \inte K$. For the sake of simplicity, denote the vertices of $L_i$ by $0=p_1,...,p_k=v_i$.  Now for $\beta>0$ fixed, we can choose $n$ so large that $\frac{\mathbb{Z}^d}{n}$ has vertices closer than $\beta$ in $\ell_1$ to any vertex of $L_i$. Denote such vertices of $\frac{\mathbb{Z}^d}{n}$ by $0=q_1,...,q_k=v_i$. Also if $n$ is large enough, if we consider the smallest lattice hypercubes of $\frac{\mathbb{Z}^d}{n}$ crossed by $L_i$, they are still in $\inte K$, and $q_1,...,q_k$ might be chosen to be the vertices of these cubes. Thus using the edges of these cubes we can find a path $\Gamma_{i,n}$ in $G_n$ from $0$ to $v_i$, which stays in $\inte K$, and optimal in $\ell_1$ between any vertices $q_j$ and $q_{j+1}$. Hence we can deduce by triangle inequality that
\begin{displaymath}
|\Gamma_{i,n}|=\sum_{j=1}^{k-1}|q_j q_{j+1}|\leq\sum_{j=1}^{k-1}|p_j p_{j+1}|+2\sum_{j=1}^{k}|p_j q_{j}|\leq|L_i|+2k\beta.
\end{displaymath}
As $k$ is fixed and $\beta$ can be arbitrarily small, it guarantees that for large enough $n$ the length $|\Gamma_{i,n}|$ is less than $\inf A$. We can define $H_n$ for infinitely many $n$ appropriately based on this argument: we require it to contain all the vertices and edges of $\frac{\mathbb{Z}^d}{n}$ in $D_{\frac{1}{\sup A}}$, it is clearly connected and all the vertices are accessible by a path of $\ell_1$-length less than $\frac{1}{\inf A}$. Furthermore, we require it to contain the vertices $v_1,...,v_m$ and the paths $\Gamma_{i,n}$, which does not mess up the condition about the distance of vertices from the origin, and guarantees the bound on the Hausdorff distance. \end{proof}

\begin{proof}[Proof of Theorem 1.5]

By our previous remarks, it suffices to prove that if $K\in\mathcal{P}_{A,0}^d$, then in a residual set of $\Omega$ there exists a suitable sequence $t_n\to \infty$ with $\frac{\tilde{B}(t_n)}{t_n}\to K$. Denote the set of configurations not having this property by $F(K)$. Then by the definition of convergence, $F(K)$ can be expressed as a countable union as follows:
\begin{displaymath}
F(K)=\bigcup_{i=1}^{\infty}\bigcup_{m=1}^{\infty}F\left(K,\frac{1}{i},m \right),
\end{displaymath}
where $F(K,\varepsilon,\mu_0)$ stands for the set of configurations in which for any $\mu>\mu_0$ we have
\begin{displaymath}
d_H\left(K,\frac{\tilde{B}(\mu)}{\mu}\right)>\varepsilon.
\end{displaymath}
Verifying that $F\left(K,\frac{1}{i},m \right)$ is nowhere dense for each $i,m$ would conclude the proof. Clearly it suffices to do so for large enough $i,m$.

As usual, fix $U$ to be a cylinder set, and denote the set of edges belonging to nontrivial projections of $U$ by $E_U=\{e_1, e_2, ..., e_k\}$. As in the proof of Theorem 2.1, we can construct a smaller cylinder set by shrinking the projections $U_{e_1}, ..., U_{e_k}$, such that all of these projections are bounded in $\mathbb{R}$. Again, we denote these new projections by $U'_{e_i}$, $i=1,...,k$, and the cylinder set defined by them by $U'$. Then for any configuration in $U'$, the sum of passage times over the edges $e_1,...,e_k$ is bounded by a constant $C$. Our goal is to find a cylinder set $V\subseteq{U'}$ and some $\mu>m$ such that the Hausdorff distance of $\frac{\tilde{B}(\mu)}{\mu}$ and $K$ is at most $\frac{1}{i}$ for any configuration in $V$. We distinguish the cases based on the value of $\inf A$ and $\sup A$. The idea will be the same in the three cases, but the realization will vary.

\begin{enumerate}[(i)]

\item Assume first that $\inf A=0$ and $\sup A=\infty$, as technically it is the easiest. We pursue $\mu$ as a large enough $n\in\mathbb{N}$ for which $n>C$ and which satisfies Lemma 5.1 with $\varepsilon=\frac{1}{2i}$. Now we try to choose $V$ such that for any configuration in $V$, the set $\tilde{B}(n)$ is close to $nH_n$, which is a subgraph of $\mathbb{Z}^d$. For this aim, denote the edge set of $nH_n$ by $E(nH_n)$. For any edge $e\in E(nH_n)\setminus E_U$ we define $V_e$ to be $[0,\varepsilon_e)\cap A$, where the $\varepsilon_e$s are small enough to have a smaller sum than $n-C$. Furthermore, for any further edge $e$ leaving the graph $nH_n$ or neighboring to one of the edges in $E_U$, we define $V_e$ to have strictly larger elements than $n$. By the first part of the definition $nH_n\subseteq\tilde{B}(n)$ obviously holds. Furthermore, $\tilde{B}(n)$ may differ from $nH_n$ in only the edges of $E_U$, which yields that their Hausdorff distance is at most $k$. As a consequence, since the Hausdorff distance of $nH_n$ and $K$ is at most $\frac{n}{2i}$, by triangle inequality we have that 
\begin{displaymath}
d_H\left(K,\frac{\tilde{B}(n)}{n}\right)\leq\frac{1}{2i}+\frac{k}{n}\leq\frac{1}{i},
\end{displaymath}
if $n$ is large enough. It concludes the proof in this case.

\item If $\inf A=0$ and $A$ is bounded, the proof relies on the same concept, but our task is a bit more difficult. We choose $n$ as in (i), and look for $V$ with a similar property. If $n$ is large enough, we have $E_U\subseteq nH_n$. Choose $N\in\mathbb{N}$ with $N\sup A > C$. For an edge $e$ in $D_{\frac{n}{\sup A} -N}$, which is not contained by $E_U$, we define $V_e$ to be $(\sup A -\varepsilon_e, \sup A)\cap A$, where the $\varepsilon_e$s are small enough, they are to be fixed later. Thus these are expensive edges. For any other edge $e$ of $E(nH_n)\setminus E_U$ we stick to the definition in (i): $V_e=[0,\varepsilon_e)\cap A$, here $\varepsilon_e$ is small again, these are cheap edges. Finally, for any further edge $e$ with distance at most $2N$ from the graph $nH_n$, we define $V_e$ to be $(\sup A -\varepsilon_e, \sup A)\cap A$, hence these are expensive edges again. Now if we consider any point $x\in nH_n$, there is a path $\Gamma$ to it from the origin which might use the edges of $E_U$, and uses at most $\left[\frac{n}{\sup A} -N\right]$ expensive edges. All the other edges in $\Gamma$ are cheap. Thus by the definition of $N$, if we choose the $\varepsilon_e$s to have small enough sum, the passage time of $\Gamma$ is bounded by $n$ for any configuration in $V$, which results in $x\in\tilde{B}(n)$. Furthermore, if a point $x$ is further from $nH_n$ than $2N$, any path from $0$ to $x$ uses more than $\frac{n}{\sup A}$ expensive edges, which yields that if the $\varepsilon_e$s have small enough sum, $x\notin\tilde{B}(n)$. As a consequence, the Hausdorff distance of $\tilde{B}(n)$ and $nH_n$ is at most $2N$, which is fixed. The final step is the same triangle inequality as in (i).

\item Assume $\inf A>0$ and $\sup A=\infty$. We might attempt to copy the argument of (i). The only difficulty is that we have to replace the projections $[0,\varepsilon_e)\cap A$ by $(\inf A, \inf A +\varepsilon_e)\cap A$. Now by the last condition on $H_n$ in Lemma 5.1, for any of vertex $x$ of $nH_n$ there exists a path $\Gamma$ from $0$ to $x$ with $\ell_1$-length less than $\frac{n}{\inf A}$. Thus if $C<N \inf A$, we might obtain that for any such $x$ with $|x|<\frac{n}{\inf A} - N$, the passage time of $\Gamma$ is at most $n$ for a good choice of $\varepsilon_e$. This means that $\tilde{B}(n)$ contains all the points of $nH_n$ for any configuration in $V$, except for possibly those ones which are closer to $\partial D_\frac{n}{\inf A}$ than $N$. On the other hand, $\tilde{B}(n)$ cannot contain a point which is further from $nH_n$ than $k$. Thus the Hausdorff distance of n$H_n$ and $\tilde{B}(n)$ can be bounded by $N+k$, and the proof might be finished using the triangle inequality. \end{enumerate} \end{proof}

\section{Concluding remarks}

The goal of the present paper was not to widen our knowledge about the vast field of the probabilistic setup, but to introduce another nice topic. In certain questions handled in this work there is room for improvement: the constant used in Theorem 1.2 or the bound on the number of distinct geodesic rays in Theorem 1.3 might be lowered by a smarter geometric argument. Moreover, it would be nice to know the kind of configurations in which there are multiple geodesic rays, and Theorem 1.5 should also be generalized to any set $A$. Other problems may also be borrowed from the original first passage percolation. Concerning certain questions, one may observe other infinite graphs instead of the lattice. Our wildest hope is that some of the appearing ideas might be recycled somehow in probability theory, even though we find it unlikely due to the quite different nature of the areas.

\subsection*{Acknowledgements} 

I am highly grateful to Zolt\'an Buczolich for the time he spent with proofreading this paper and spotting a flaw in one of the proofs. Moreover, I am thankful to L\'aszl\'o Erd\H{o}s, Jan Maas, and Peter Nejjar for introducing me to the beautiful theory of first passage percolation.


\begin{thebibliography}{99}

\bibitem{AD} {\sc A. Auffinger, M. Damron} (2013). {\it Differentiability at the edge of the limit
shape and related results in first passage percolation.} Probability Theory and Related Fields, \textbf{156}, 193-227.

\bibitem{CD} {\sc J. T. Cox, R. Durrett} (1981). {\it Some limit theorems for percolation with necessary
and sufficient conditions.} Annals of Probab., \textbf{9}, 583-603.


\bibitem{FPP} {\sc A. Auffinger, M. Damron, J. Hanson} (2016). {\it 50 years of first passage percolation.} Preprint. arXiv1511.03262

\bibitem{K} {\sc H. Kesten} (1986). {\it \'Ecole d'\'Et\'e de Probabilit\'es de Saint Flour XIV.} Lecture Notes in Mathematics, \textbf{1180}, 125-264.

\bibitem{WR} {\sc J. C. Wierman, W. Reh} (1978). {\it On conjectures in first passage percolation
theory.} Annals of Probab., \textbf{6}, 388-397.


\end{thebibliography}
\end{document}